\theoremstyle{plain}
\newtheorem{thm}{Theorem}[section]
\newtheorem{lem}[thm]{Lemma}
\newtheorem{prop}[thm]{Proposition}
\theoremstyle{definition}
\newtheorem{defn}{Definition}[section]
\newtheorem{prob}{Problem}
\DeclareMathOperator{\type}{type}
\DeclareMathOperator{\cc}{cc}
\def\k{\mathbf{k}}
\def\Z{\mathbb{Z}}
\def\C{\mathcal{C}}
\def\vput(#1)#2{\cnode*(#1){1pt}{#2}}
\def\edge#1#2{\ncline{#1}{#2}}
\begin{document}
\title[Combinatorial rigidity] {Combinatorial rigidity of
  $3$-dimensional simplicial polytopes}

\author[S.Choi]{Suyoung Choi}
\address{Department of Mathematics, Osaka City University, Sugimoto, Sumiyoshi-ku, Osaka 558-8585, Japan}
\email{\href{mailto:choi@sci.osaka-cu.ac.jp}{choi@sci.osaka-cu.ac.jp}}
\urladdr{\url{http://math01.sci.osaka-cu.ac.jp/~choi}}

\author[J.S.Kim]{Jang Soo Kim}
\address{LIAFA, University of Paris 7, 175 rue du Chevaleret, Paris, France}
\email{\href{mailto:jskim@kaist.ac.kr}{jskim@kaist.ac.kr}}
\urladdr{\url{http://liafa.jussieu.fr/~jskim}}

\thanks{The first author was supported by the Japanese Society for the Promotion
  of Sciences (JSPS grant no. P09023).}  
\thanks{The second author is supported
  by the grant ANR08-JCJC-0011.}

\keywords{graded Betti number, cohomological rigidity, combinatorial rigidity,
  simplicial polytope, reducible polytope}
\subjclass[2000]{Primary: 05A18; Secondary: 05E35, 05A15, 05A19}

\date{\today}
\maketitle

\begin{abstract}
  A simplicial polytope is \emph{combinatorially rigid} if its combinatorial
  structure is determined by its graded Betti numbers which are important
  invariant coming from combinatorial commutative algebra. We find a necessary
  condition to be combinatorially rigid for $3$-dimensional reducible simplicial
  polytopes and provide some rigid reducible simplicial polytopes.
\end{abstract}

\section{Introduction} \label{section:intro}

Davis and Januskiewicz~\cite{Da-Ja-1991}
introduced the notion of what is now
called a \emph{quasitoric manifold} which is a real $2d$-dimensional closed
smooth manifold $M$ with a locally standard smooth action of $T:=(S^1)^d$ whose
orbit space can be identified with a simple polytope. We note that there is a
natural bijection between the set of simple polytopes and the set of simplicial
polytopes via the dual operation. A quasitoric manifold $M$ is said to be
\emph{over} a simplicial polytope $P$ if the orbit space of $M$ can be
identified with the dual of $P$.\footnote{Many toric topologists prefer to use
  the terminology `simple' instead of `simplicial'. However, to simplify the
  arguments, we define all notions in terms of simplicial polytopes throughout
  this paper.} By Davis and Januskiewicz \cite{Da-Ja-1991}, the equivariant
cohomology ring $H^\ast_T(M) = H^\ast(ET \times_T M)$ with $\Z$-coefficient is
isomorphic to the face ring $\Z[P]$ of $P$ as a graded ring, where $ET$ is a
contractible space which admits a free $T$-action. We note that the natural
projection $p:ET \times_T M \to BT$ induces a $H^\ast(BT)$-module structure of
$H^\ast_T(M)$, where $BT:=ET/T$. They also showed that it is a free-module,
i.e., $H^\ast_T(M) \cong H^\ast(M) \otimes H^\ast(BT)$. Hence, we deduce that
$H^\ast(M) \cong H^\ast_T(M)/p^\ast(H^\ast(BT)) \cong \Z[P]/p^\ast(H^\ast(BT))$,
where $p^\ast : H^\ast(BT) \to H^\ast_T(M)$ is the induced map of $p$. See
\cite[Section 5]{Bu-Pa-2002} for more details.  Thus $H^\ast(M)$ contains some
information of the orbit space $P$. With this viewpoint, Choi et
al. \cite{Ch-Pa-Su-2010} defined the cohomological rigidity of $P$ as
follows.\footnote{The original definition of cohomological rigidity was firstly
  introduced by Masuda and Suh in \cite{Ma-Su-2008} in terms of toric manifolds
  and fans.} A simplicial polytope $P$ is \emph{cohomologically rigid} if there
exists a quasitoric manifold $M$ over $P$, and whenever there exists a
quasitoric manifold $N$ over another polytope $Q$ with a graded ring isomorphism
$H^*(M)\cong H^*(N)$, then $P=Q$ up to isomorphism. Choi et al.~
\cite{Ch-Pa-Su-2010} also showed that $H^*(M)\cong H^*(N)$ implies
$\beta_{i,j}(P)=\beta_{i,j}(Q)$ for all $i,j$, where $\beta_{i,j}(P)$ is the
\emph{$(i,j)$th graded Betti number} of $P$. One can define the graded Betti numbers
$\beta_{i,j}(P)$ using a finite free resolution of the face ring of $P$, for
example see \cite{Ch-Ki-2010}. Instead of doing this, we will simply take the
following Hochster's formula as the definition of $\beta_{i,j}(P)$:
\begin{equation}
  \label{eq:1}
\beta_{i,j}(P)=\sum_{\substack{W\subset V(P)\\ |W|=j}}
\dim_{\k}\widetilde H_{j-i-1}(P|_W;\k),
\end{equation}
where $V(P)$ is the set of vertices of $P$, $\k$ is an arbitrary field and
$P|_W$ is the realization of the simplicial complex $\{F \cap W : F \in
\Delta(P)\}$, where $\Delta(P)$ is the boundary complex of $P$.

With this motivation, we define the
following.

\begin{defn}
  A simplicial polytope $P$ is \emph{combinatorially rigid} (or simply
  \emph{rigid}) if we have $P=P'$ for any simplicial polytope $P'$ satisfying
  $\beta_{i,j}(P)=\beta_{i,j}(P')$ for all $i, j\geq0$.
\end{defn}

Hence, if $P$ supports a quasitoric manifold and $P$ is combinatorially rigid,
then $P$ is cohomologically rigid.

In the present paper, we investigate the combinatorial rigidity of
$3$-dimensional simplicial polytopes.  Remark that since all $3$-dimensional
simplicial polytopes support quasitoric manifolds, $3$-dimensional
combinatorially rigid polytopes are cohomologically rigid.

Let $P$ be a $3$-dimensional simplicial polytope with $n$ vertices. Then
$\dim_{\k}\widetilde H_{j-i-1}(P|_W;\k) = 0$ if $j-i-1\geq2$ for $W \subsetneq
V(P)$ and $\dim_{\k}\widetilde H_{j-i-1}(P|_W;\k)=\delta_{j-i-1,2}$ for
$W=V(P)$, where $\delta_{x,y}=1$ if $x=y$ and $0$ otherwise.  Thus it is enough
to consider $\beta_{i-1,i}(P)$ and $\beta_{i-2,i}(P)$. By the Poincar\'{e}
duality $\beta_{i,j}(P) = \beta_{n-3-i,n-j}(P)$, we have
$\beta_{i-2,i}(P)=\beta_{n-i-1,n-i}(P)$. Thus we only need to consider
$\beta_{i-1,i}(P)$, which we will call \emph{the $i$th special graded Betti
  number} and we denote $b_i(P):=\beta_{i-1,i}(P)$. By \eqref{eq:1}, we can
interpret $b_i(P)$ in a purely combinatorial way as follows:
\begin{equation}\label{eq:bk}
b_i(P)=\sum_{\substack{W\subset V(P)\\|W|=i}}
\left(\cc(P|_W)-1\right),
\end{equation}
where $\cc(P|_W)$ denotes the number of connected components of $P|_W$.

From the above observation, we get the following proposition.

\begin{prop}
  Let $P$ be a $3$-dimensional simplicial polytope. Then $P$ is combinatorially
  rigid if and only if $P$ is determined by $b_i(P)$'s for $i\geq 0$.
\end{prop}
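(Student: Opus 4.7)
The $(\Leftarrow)$ implication is immediate, since the sequence $(b_i(P))_{i\ge 0}$ is part of the data $\{\beta_{i,j}(P)\}_{i,j}$, so determination of $P$ by the former \emph{a fortiori} gives determination of $P$ by the latter. For the $(\Rightarrow)$ implication, I would prove the stronger fact that the sequence $(b_i(P))_{i\ge 0}$ already determines the entire table of graded Betti numbers $\beta_{i,j}(P)$. From the observations just above the proposition, the only graded Betti numbers of a $3$-dimensional simplicial polytope $P$ on $n$ vertices that can be non-zero are $\beta_{i-1,i}(P)=b_i(P)$, the entries $\beta_{i-2,i}(P)$, and $\beta_{n-3,n}(P)=1$; and the Poincar\'e duality formula gives
\[
\beta_{i-2,i}(P)=\beta_{n-i-1,n-i}(P)=b_{n-i}(P),
\]
so that, once the vertex count $n$ is known, every $\beta_{i,j}(P)$ is an explicit entry of the sequence $(b_j(P))_j$.

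The one step that is not pure bookkeeping is the recovery of $n$ from the $b_i(P)$'s, and this is where I would concentrate. I would read $n$ off from $b_2(P)$: by \eqref{eq:bk}, $b_2(P)$ counts the pairs of vertices of $P$ that are not joined by an edge, and combined with the classical count $f_1(P)=3n-6$ of edges of a $3$-dimensional simplicial polytope on $n$ vertices one gets
\[
b_2(P)=\binom{n}{2}-(3n-6)=\binom{n-3}{2},
\]
which is strictly increasing in $n$ for $n\ge 4$ and therefore recovers $n$ uniquely. Putting the two points together, any $3$-dimensional simplicial polytope $P'$ satisfying $b_i(P')=b_i(P)$ for every $i$ automatically satisfies $\beta_{i,j}(P')=\beta_{i,j}(P)$ for every $i,j$, whereupon combinatorial rigidity of $P$ forces $P'=P$. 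The only (very mild) obstacle I anticipate is this explicit extraction of $n$; the rest of the argument is a direct reuse of the facts already recorded in the paragraph immediately preceding the proposition.
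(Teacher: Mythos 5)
Your proposal is correct and matches the paper's intended argument: the paper merely says ``From the above observation, we get the following proposition,'' leaving exactly the reduction you spell out (only $\beta_{i-1,i}$, $\beta_{i-2,i}$ and $\beta_{n-3,n}$ can be nonzero, the middle ones are recovered by Poincar\'e duality once $n$ is known, and $n$ is recovered from $b_2$, which the paper records later as Proposition~\ref{thm:b2}). The only trivial omission is $\beta_{0,0}=1$, which is constant and hence irrelevant.
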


So far, several polytopes are proved to be cohomologically rigid. In
$3$-dimensional case, Choi et al.~\cite{Ch-Pa-Su-2010} classified all
cohomologically rigid polytopes with at most $9$ vertices using computer and
proved that the icosahedron is cohomologically rigid. Since they only used the
graded Betti numbers, what they found are combinatorially rigid as well.

To state our main results we need to define connected sum which is a simple
operation to get a $d$-dimensional simplicial polytope from two $d$-dimensional
simplicial polytopes.

Let $P_1$ and $P_2$ be simplicial polytopes. A \emph{connected sum} of $P_1$ and
$P_2$ is a polytope obtained by attaching a facet of $P_1$ and a facet of
$P_2$. It depends on the way of choosing the two facets and identifying their
vertices. Let $\C(P_1\# P_2)$ denote the set of connected sums of $P_1$ and
$P_2$. If there is only one connected sum of $P_1$ and $P_2$ up to isomorphism,
then we will write the unique polytope as $P_1\# P_2$.

If a simplicial polytope $P$ can be expressed as a connected sum of two
polytopes, then $P$ is called \emph{reducible}. Otherwise, $P$ is called
\emph{irreducible}.

Let $T_4$, $C_8$, $O_6$, $D_{20}$ and $I_{12}$ be the five Platonic solids: the
tetrahedron, the cube, the octahedron, the dodecahedron and the icosahedron
respectively.

In this paper, we prove the following necessary condition to be combinatorially
rigid for $3$-dimensional reducible simplicial polytopes. See
Section~\ref{sec:necessary} or Figures~\ref{fig:cube}, \ref{fig:dodeca} and
\ref{fig:prism} for the definition of $\xi_1(C_8)$, $\xi_2(C_8) $,
$\xi_1(D_{20})$, $\xi_2(D_{20})$ and $B_n$, the bipyramid with $n$ vertices.

\begin{thm}\label{thm:necessary}
  Let $P$ be a $3$-dimensional simplicial polytope.  If $P$ is reducible and
  combinatorially rigid, then $P$ is either $T_4 \# T_4 \# T_4$ or $P_1\#P_2$,
  where
\begin{align*}
  P_1 & \in \{T_4,O_6,I_{12}\},\\
  P_2 &\in \{T_4,O_6,I_{12},\xi_1(C_8),\xi_2(C_8)
  ,\xi_1(D_{20}),\xi_2(D_{20})\} \cup \{B_n:n\geq7\}.
\end{align*}
\end{thm}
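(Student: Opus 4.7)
The plan is to convert combinatorial rigidity of a reducible $P$ into rigidity conditions on its two summands plus uniqueness of the gluing. The first step is to derive a closed-form expression for the special graded Betti numbers of a connected sum. Writing $P\in\C(P_1\# P_2)$ for a gluing along triangles $F_1\subset P_1$ and $F_2\subset P_2$, I would analyze \eqref{eq:bk} by splitting each $W\subset V(P)$ according to its intersection with the identified triangle $F$ and with the complements $V(P_i)\setminus V(F)$. For each such split, $\cc(P|_W)$ factors through the connectivity of the restrictions $P_i|_{W_i\cup F'}$ for $F'\subseteq V(F)$; because $|F|=3$ and the three interface vertices play a prescribed local role, summing over $W$ collapses to an expression in $b_i(P_1),b_i(P_2),|V(P_1)|,|V(P_2)|$ that is independent of the choice of $F_1,F_2$ and of the identifying bijection. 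In particular, every element of $\C(P_1\# P_2)$ shares the same $b_i$-profile.

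With this gluing-independence in hand, two obstructions to rigidity surface. If $|\C(P_1\# P_2)|\geq 2$, then the polytopes in $\C(P_1\# P_2)$ all share the same $b_i$ but are pairwise non-isomorphic, so none of them is rigid. Likewise, if $P_1$ admits a non-isomorphic companion $P_1'$ with $|V(P_1')|=|V(P_1)|$ and $b_i(P_1')=b_i(P_1)$ for every $i$, then any $Q\in\C(P_1'\# P_2)$ matches the Betti numbers of $P$ without being isomorphic to $P$, again violating rigidity. Hence a rigid reducible $P$ forces (i) both summands to be rigid within their own vertex-count class, and (ii) the connected sum $P_1\# P_2$ to be unique up to isomorphism.

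The classification now comes from the interplay of (i) and (ii). Uniqueness of the gluing requires a symmetry condition on the summands: the stabilizer of some facet in $P_1$ must act on its vertices transitively enough to identify any bijection with a facet of $P_2$ as equivalent, and the facets of $P_2$ themselves must form a single orbit under $\mathrm{Aut}(P_2)$. Among $3$-dimensional simplicial polytopes, the first condition singles out the simplicial Platonic solids $T_4,O_6,I_{12}$ as the natural candidates for $P_1$, with the additional case $T_4\# T_4\# T_4$ appearing because $T_4\# T_4$ inherits enough symmetry to carry one further tetrahedral summand. The candidates for $P_2$ are then the polytopes that are individually rigid and whose facet structure is symmetric enough to avoid ambiguous gluing against a Platonic $P_1$; this accounts for $T_4,O_6,I_{12}$, the specific triangulations $\xi_j(C_8),\xi_j(D_{20})$ derived by symmetric stellar subdivision from the cube and dodecahedron, and the bipyramids $B_n$ for $n\geq 7$ (whose dihedral symmetry makes the gluing canonical while individual rigidity begins to hold in this range).

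The principal obstacle will be the exhaustive ruling out of polytopes not on the list. For each admissible vertex count, one must either exhibit a competing reducible polytope with matching $b_i$ (witnessing non-rigidity) or verify no such competitor exists, which requires combining the computer classification of \cite{Ch-Pa-Su-2010} for small $n$ with direct combinatorial arguments for the sporadic cases $\xi_j(C_8),\xi_j(D_{20})$ and for the infinite family $B_n$. The bipyramid analysis is especially delicate: one must show that the $b_i$-sequence of $B_n$ distinguishes it from every other $3$-dimensional simplicial polytope on the same vertex count when glued to a Platonic summand, and that the threshold $n\geq 7$ is sharp because smaller bipyramids either are themselves reducible (e.g., $B_4\cong T_4\# T_4$) or fail individual rigidity.
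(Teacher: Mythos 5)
Your opening move matches the paper's: derive the connected-sum formula for the special graded Betti numbers, observe it is independent of the choice of facets and of the identifying bijection, and conclude via \eqref{eq:bk} that a rigid reducible $P$ must be the \emph{unique} element of $\C(P_1\#\cdots\#P_\ell)$. That is precisely Proposition~\ref{thm:unique}, and your extra remark that $P_1$ must also be determined by its own $b_i$-sequence is a correct (if unneeded) observation.

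Where the proposal has a genuine gap is in how uniqueness of the connected sum is turned into the explicit list. You assert that uniqueness ``requires a symmetry condition'' and invoke stabilizers of facets and single orbits under $\mathrm{Aut}$, but this is stated in the wrong direction (those are sufficient-looking conditions, whereas the theorem needs a necessary one) and is never proved. The paper instead establishes Lemma~\ref{lem:face-transitive}: if $|\C(P\#Q)|=1$ then both $P,Q$ are face-transitive and one of them is regular, via a concrete argument comparing face types $(x,y,z)$ and $(a,b,c)$ under the two gluings. It then imports the Fleischner--Imrich classification of face-transitive $3$-polytopes (Proposition~2.4): bipyramids, Platonic solids, and first/second subdivisions of Platonic solids, followed by the identifications $\xi_2(T_4)=\xi_1(C_8)$, $\xi_2(C_8)=\xi_2(O_6)$, $\xi_2(D_{20})=\xi_2(I_{12})$ and the discard of the reducible $\xi_1(T_4),\xi_1(O_6),\xi_1(I_{12})$. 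Without this lemma and this external classification you cannot actually produce the finite list; ``one must verify no such competitor exists'' is a statement of the task, not a proof, and moreover the theorem is a \emph{necessary} condition, so the job is to rule candidates out, not to check each one against all competitors.

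Two smaller problems: (1) the $\ell\geq3$ case is not really handled. You wave at ``$T_4\#T_4$ inherits enough symmetry,'' but the actual argument (Proposition~2.2 in the paper) is constructive: for any connected sum of $\geq3$ irreducibles other than $T_4\#T_4\#T_4$, one can build two nonisomorphic elements of $\C(P_1\#\cdots\#P_\ell)$ by choosing whether all $3$-belts share a common edge, which kills rigidity. (2) Your remark about bipyramids is off: $B_4$ does not exist (bipyramids have $\geq5$ vertices), $B_5=T_4\#T_4$ is the reducible one, and the lower bound $n\geq7$ in the theorem's list is bookkeeping ($B_5$ is reducible, $B_6=O_6$ is already listed), not a statement about where ``individual rigidity begins to hold.''
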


Note that $B_n$ is defined for $n\geq5$ and we have $B_5=T_4\#T_4$ and $B_6=O_6$.

In fact, $T_4 \# T_4 \# T_4$ is known to be rigid, see \cite{Ch-Pa-Su-2010}. We
also prove that $P_1\#P_2$ is rigid for some $P_1$ and $P_2$ in
Theorem~\ref{thm:necessary}.

\begin{thm}\label{thm:rigid}
  The following polytopes are combinatorially rigid:
$$T_4\# T_4, T_4\# O_6, T_4\# I_{12}, T_4\# B_n, O_6\# O_6, O_6\# B_n,$$
where $n\geq 7$. See Table~\ref{tab:rigid}.
\end{thm}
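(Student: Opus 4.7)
The plan is to verify each polytope $P$ in the list individually, showing that any $3$-dimensional simplicial polytope $P'$ with $b_i(P')=b_i(P)$ for all $i\ge 0$ satisfies $P'\cong P$; by the proposition of Section~\ref{section:intro}, this gives combinatorial rigidity. I would extract basic invariants from the Betti sequence first: since $P|_W$ is disconnected for $|W|=2$ exactly when the two vertices form a non-edge, one has
\begin{equation*}
  b_2(P) = \binom{|V(P)|}{2} - (3|V(P)|-6)
\end{equation*}
for every simplicial $3$-polytope, so $b_2(P')=b_2(P)$ forces $|V(P')|=|V(P)|$. Via the Poincar\'e duality $\beta_{i-2,i}(P)=b_{|V(P)|-i}(P)$ noted in the introduction, the values $b_{|V(P)|-i}$ for small $i$ encode $\dim_\k\widetilde H_1(P|_W)$ for small $W$, and in particular detect missing triangles of $P$, namely $3$-cycles in the $1$-skeleton that do not bound a facet.

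Every polytope listed is a nontrivial connected sum, so its attaching triangle is a missing triangle. The first step is to argue that any $P'$ matching the Betti sequence also has a missing triangle; since every $3$-cycle on the boundary $2$-sphere either bounds a facet or separates the sphere into two nontrivial disks, $P'$ is then reducible. Theorem~\ref{thm:necessary} restricts $P'$ to either $T_4\#T_4\#T_4$ or a connected sum $Q_1\#Q_2$ from the enumerated families, and the matching vertex count leaves only finitely many candidates (in any $B_n$ summand the index $n$ is determined by $|V(P')|$).

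With the candidate list reduced to a finite set, I would conclude by direct computation, using \eqref{eq:bk} to evaluate $(b_i)$ on each remaining candidate and checking that only the target $P$ agrees with the prescribed sequence; the resulting data is collected in Table~\ref{tab:rigid}. The cases $T_4\#T_4$, $T_4\#O_6$, $T_4\#I_{12}$, and $O_6\#O_6$ involve only finitely many competing candidates and reduce to a small enumeration, while for the infinite families $T_4\#B_n$ and $O_6\#B_n$ I would derive closed-form expressions for $b_i$ as polynomials in $n$ and $i$ and compare these against the Betti sequences of the other candidates of the same vertex count.

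The main obstacle I anticipate is separating $T_4\#B_{n+2}$ from $O_6\#B_n$: these polytopes have the same vertex count and each carries exactly one obvious missing triangle at its attaching facet, so neither $b_2$ nor a leading missing-triangle tally suffices. The distinguishing invariant must be sensitive to the shape of the Platonic cap; I expect $b_4$ (together with its Poincar\'e dual) to resolve the ambiguity, as it counts $4$-subsets $W$ whose induced subcomplex is either disconnected or carries nontrivial $H_1$, and the number of such subsets localized near the cap differs between a $T_4$-attachment and an $O_6$-attachment. Isolating such a local invariant cleanly enough to work uniformly in $n$ is, I expect, the technically delicate portion of the proof.
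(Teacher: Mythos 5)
Your plan has a logical gap at a crucial juncture: you propose to use Theorem~\ref{thm:necessary} to constrain the competing polytope $P'$, but that theorem is a \emph{necessary condition for rigidity}, and nothing in your setup entitles you to assume $P'$ is rigid. The argument must show that any $P'$ with the same special graded Betti numbers as $P$ equals $P$, and a priori $P'$ could be a reducible polytope far outside the list in Theorem~\ref{thm:necessary} (e.g.\ a connected sum of two non-face-transitive irreducibles, which the Betti sequence alone does not rule out). Invoking Theorem~\ref{thm:necessary} here is circular: it only restricts $P'$ \emph{after} you know $P'$ is rigid, which is not available before rigidity of $P$ itself is established. The paper avoids this entirely by never constraining $P'$ via Theorem~\ref{thm:necessary}; instead it works directly with quantitative bounds on $b_{n-4}$ valid for \emph{all} irreducible polytopes, namely Theorems~\ref{thm:irr n-4} and~\ref{thm:second max} (which characterize when $b_{n-4}$ attains its first and second maximum), together with the decomposition formula of Lemma~\ref{thm:mainlemma} and the aggregated bound of Theorem~\ref{thm:n-4 ell}.

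A second, related shortfall is that the core work for the infinite families $T_4\#B_n$ and $O_6\#B_n$ is only described, not done. You correctly anticipate that $b_2$ pins down the vertex count and $b_{n-3}$ pins down the number of $3$-belts, and you correctly identify $T_4\#B_{n+2}$ versus $O_6\#B_n$ as the delicate comparison. But ``compute $b_4$ and localize near the cap'' is a sketch; the paper's actual resolution is structural, not computational: from Lemma~\ref{thm:mainlemma} one gets $b_{n_1-4}(P_1)+b_{n_2-4}(P_2) = f(6)+f(n)$, Lemma~\ref{thm:flemma} forces $n_2\le 6$, and then Theorem~\ref{thm:second max} kills the $n_2=4$ case. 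This mechanism works uniformly in $n$ without any by-hand evaluation of $b_4$. For $T_4\#I_{12}$ the paper similarly uses the sign of $b_{n-4}$ to detect $T_4$ and the absence of $4$-belts to detect $I_{12}$. Your proposal would need the extremal theorems of Section~\ref{sec:max} (or a substitute) to make the infinite-family comparison rigorous, and those are the real content of the proof.
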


\begin{table}
  \begin{tabular}{c|c|c|c|c|c|c|c|c}
    $\#$ & $T_4$ & $O_6$ & $I_{12}$ & $B_n$, $n\geq7$ & $\xi_1(C_8)$& $\xi_2(C_8)$&
    $\xi_1(D_{20})$ & $\xi_2(D_{20})$\\ \hline
    $T_4$ & rigid & rigid & rigid & rigid & ?& ?& ?& ?\\ \hline
    $O_6$ & - & rigid & ? & rigid & ?& ?& ?& ?\\ \hline
    $I_{12}$ & - & - & ? & ? & ?& ?& ?& ?\\
 \end{tabular}
 \caption{Combinatorial rigidity of a connected sum of
   two irreducible polytopes.}
\label{tab:rigid}
\end{table}

The rest of this paper is organized as follows.  In Section~\ref{sec:necessary}
we prove Theorem~\ref{thm:necessary}. In Section~\ref{sec:max} we find the
maximum of $b_{n-4}(P)$ for a simplicial polytope $P$ with $n$ vertices.  In
Section~\ref{sec:rigid} we prove Theorem~\ref{thm:rigid}.

\section{A necessary condition for rigid reducible polytopes
}\label{sec:necessary}

The authors \cite{Ch-Ki-2010} found the following formula for the special graded
Betti numbers of $P\in\C(P_1\# P_2)$ for $d$-dimensional simplicial polytopes
$P_1$ and $P_2$ with $n_1$ and $n_2$ vertices respectively:
\begin{equation}\label{thm:connected_sum}
b_k (P) = \sum_{i=0}^k \left( b_i (P_1) \binom{n_2 -
  d}{k-i} + b_i (P_2) \binom{n_1 - d}{k-i}\right) + \binom{n_1 +
  n_2 -2d}{k}.
\end{equation}

The above formula says that the special graded Betti numbers of a connected sum
of two polytopes do not depend on the ways of choosing the two facets and
identifying them. Thus we get the following.

\begin{prop}\label{thm:unique}
  Let $P\in\C(P_1\# P_2)$ for $d$-dimensional simplicial polytopes $P_1$ and
  $P_2$. If $P$ is rigid, then $P$ is the only element in $\C(P_1\# P_2)$.
\end{prop}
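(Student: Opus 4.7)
The plan is to show that formula~\eqref{thm:connected_sum} forces every pair of polytopes in $\C(P_1\#P_2)$ to carry the same special graded Betti numbers, and then to upgrade this to an equality of all graded Betti numbers so that rigidity of $P$ collapses the whole set to a single isomorphism class.

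First I would fix an arbitrary $P'\in\C(P_1\#P_2)$ and apply~\eqref{thm:connected_sum} to both $P$ and $P'$. The right-hand side depends only on $n_1$, $n_2$, $d$, and the sequences $b_i(P_1)$, $b_i(P_2)$, and in particular does not see which pair of facets is glued or which bijection between their vertex sets is used to identify them. Hence $b_k(P)=b_k(P')$ for every $k\geq 0$.

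Next I would promote this to $\beta_{i,j}(P)=\beta_{i,j}(P')$ for all $i,j$. In the three-dimensional setting that is the focus of the paper, the vanishing observation recorded just before Proposition~1.2 together with Poincar\'e duality $\beta_{i,j}(P)=\beta_{n-3-i,n-j}(P)$ reduces every nonzero graded Betti number (apart from the single top-dimensional one, which is already pinned down by $n=n_1+n_2-d$) to one of the $b_k$'s. Thus equality of all the $b_k$'s automatically gives equality of all $\beta_{i,j}$'s. Combinatorial rigidity of $P$ then forces $P'=P$, and since $P'\in\C(P_1\#P_2)$ was arbitrary, $\C(P_1\#P_2)$ contains a unique element up to isomorphism.

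The only genuine subtlety is the upgrade step. For a truly arbitrary dimension $d$ one would need to know that the \emph{full} graded Betti table of a connected sum, not just its special row, is independent of the attaching data; this would require revisiting the resolution-level argument behind~\eqref{thm:connected_sum} from \cite{Ch-Ki-2010}. In the three-dimensional context in which the proposition will actually be used, however, the upgrade is free, so the proposition is essentially a direct reading of~\eqref{thm:connected_sum} followed by an invocation of the definition of rigidity.
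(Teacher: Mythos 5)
Your proposal matches the paper's tacit proof exactly: both read off from formula~\eqref{thm:connected_sum} that every member of $\C(P_1\#P_2)$ has the same sequence of special graded Betti numbers $b_k=\beta_{k-1,k}$, and then invoke the definition of rigidity. Where you go further is in noticing the dimensional subtlety, and you are right to flag it. Combinatorial rigidity demands agreement of the \emph{full} graded Betti table $\beta_{i,j}$, while~\eqref{thm:connected_sum} controls only the diagonal $j-i=1$. In dimension $3$, as the paper observes just before Proposition~1.2, the only potentially nonzero graded Betti numbers of a boundary sphere (apart from the single top-degree one) lie on the diagonals $j-i=1$ and $j-i=2$, and Poincar\'{e} duality $\beta_{i,j}=\beta_{n-3-i,n-j}$ sends $j-i=2$ to $j-i=1$, so the $b_k$'s determine the whole table once $n$ is known (and $n$ is read off from $b_2$). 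For a general $d$-polytope the diagonals run over $j-i=1,\dots,d-1$ and duality sends $j-i=k$ to $j-i=d-k$; already at $d=4$ the middle diagonal $j-i=2$ is self-dual and cannot be recovered from the special row. So the displayed formula, as written, establishes the proposition only for $d=3$, which happens to be the only case the paper ever uses; for arbitrary $d$ one would indeed need the stronger, unproved fact that the full graded Betti table of a connected sum is independent of the attaching data.
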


\emph{From now on, all polytopes that we consider are $3$-dimensional and
  simplicial unless otherwise stated.}  As usual for $3$-dimensional polytopes,
we will call $0$, $1$, $2$-dimensional face, respectively, \emph{vertex},
\emph{edge} and \emph{face}.  We will sometimes identify a polytope $P$ with its
graph which is also called the $1$-skeleton of $P$. For a set $B$ of vertices,
$P|_B$ is the subgraph of $P$ induced by $B$.

Let $P$ be a polytope with vertex set $V$.  A \emph{$k$-belt} of $P$ is a set
$B=\{v_1,v_2,\ldots,v_k\}$ of $k$ vertices such that $P|_B$ is a $k$-gon and
$P|_{V\setminus B}$ is disconnected.  Let $|V|=n$. It is easy to see that if
$b_{n-k}(P)\ne 0$ for $k>0$, then $P$ has a $t$-belt for some $t\leq k$.

Note that $P$ has a $3$-belt if and only if $P$ is reducible. If
$P\in\C(P_1\#P_2)$, then the vertices of the attached face of $P_1$ (or
equivalently $P_2$) form a $3$-belt. Using this observation we can prove the
following.

\begin{prop}
  If $P$ is a connected sum of at least $3$ irreducible polytopes and $P\ne
  T_4\# T_4\# T_4$, then $P$ is not rigid.
\end{prop}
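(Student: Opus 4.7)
The plan is to combine Proposition \ref{thm:unique} with iteration of formula \eqref{thm:connected_sum}: by iterating \eqref{thm:connected_sum}, the special graded Betti numbers of any connected sum of irreducible pieces depend only on the multiset of those pieces, so it suffices to exhibit a decomposition $P = A \# B$ with $|\C(A \# B)| \geq 2$ in order to conclude via Proposition \ref{thm:unique} that $P$ is not rigid.

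Write $P = Q_1 \# Q_2 \# \cdots \# Q_r$ with $r \geq 3$ irreducible factors and set $R = Q_2 \# \cdots \# Q_r$, so that $P = Q_1 \# R$ with $R$ reducible. The argument splits into three cases. (i) If $r \geq 4$, then $R$ itself has $\geq 3$ pieces, so its tree of pieces has an internal vertex; attaching $Q_1$ to a facet of a leaf-piece of $R$ extends the chain, while attaching to a facet of an internal piece creates a branching, and these produce two non-isomorphic polytopes in $\C(Q_1 \# R)$. (ii) If $r = 3$ and $Q_2 \not\cong Q_3$, then attaching $Q_1$ to the $Q_2$-region of $R = Q_2 \# Q_3$ versus the $Q_3$-region yields chains with different central pieces ($Q_2$ vs $Q_3$), so non-isomorphic. (iii) If $r = 3$ and $Q_2 \cong Q_3 =: Q$, I split further on whether $Q \cong T_4$. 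For $Q \not\cong T_4$, any irreducible simplicial $3$-polytope $Q \neq T_4$ has $|V(Q)| \geq 5$ and hence $\geq 5$ non-attached facets after fixing one, while the facet-stabilizer in $\mathrm{Aut}(Q)$ embeds into $S_3$ and so has order $\leq 6$; by orbit-stabilizer this group cannot act transitively on $\geq 5$ remaining facets, so $Q \# Q$ has multiple facet orbits and hence multiple non-isomorphic attachments of $Q_1$. For $Q \cong T_4$, $R \cong B_5$ is facet-transitive and $\C(Q_1 \# B_5)$ is a singleton: if $Q_1 \cong T_4$ we are in the exceptional case $T_4 \# T_4 \# T_4$, while if $Q_1 \not\cong T_4$ I instead use the alternative decomposition $P = T_4 \# (Q_1 \# T_4)$, noting that the intermediate polytope $Q_1 \# T_4$ has combinatorially distinguishable $Q_1$-facets and $T_4$-facets since $Q_1 \neq T_4$, so attaching $T_4$ to the two regions yields non-isomorphic polytopes in $\C(T_4 \# (Q_1 \# T_4))$.

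The main obstacle is verifying that different tree-of-pieces attachments really produce non-isomorphic polytopes. I expect to handle this by showing that the set of $3$-belts of a simplicial $3$-polytope is an isomorphism invariant, so the irreducible connected-sum decomposition and its tree structure are canonical, and distinct labeled tree structures correspond to non-isomorphic polytopes.
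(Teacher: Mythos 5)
Your overall strategy matches the paper's: exhibit two non-isomorphic elements of $\C(P_1\#\cdots\#P_\ell)$ and invoke Proposition~\ref{thm:unique}. But the structural invariant you use to tell them apart is different, and this is where the gap lies. The paper's invariant is condition~$(*)$ (``there is an edge contained in every $3$-belt''), which is manifestly preserved by combinatorial isomorphisms and is easy to arrange on one side and destroy on the other. Your proposed invariant is the decorated tree of irreducible pieces, and you lean on it in every case: ``leaf versus internal attachment'' in~(i), ``which piece is in the middle of the chain'' in~(ii), and in~(iii). You acknowledge in your last paragraph that the whole thing hinges on showing this tree is an isomorphism invariant, but you never actually establish that --- ``I expect to handle this by showing that the set of $3$-belts is an isomorphism invariant, so the irreducible connected-sum decomposition and its tree structure are canonical'' is precisely the missing step, not a proof of it. Knowing that $3$-belts are preserved does not by itself give you that the pieces, the way they glue into a tree, or the notion of ``middle piece'' are well-defined isomorphism invariants; that requires a genuine decomposition lemma (reconstructing each $Q_i$ by capping off a region cut out by the belts, showing this is independent of choices, etc.), and none of the intermediate steps in cases (i)--(iii) go through without it. As written, the proof is therefore incomplete.

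There is a second, smaller soft spot in case~(iii) with $Q\not\cong T_4$: you show the facet-stabilizer of $Q$ has too few elements to act transitively on the remaining facets, and conclude ``$Q\#Q$ has multiple facet orbits and hence multiple non-isomorphic attachments of $Q_1$.'' The jump from ``facets of $R$ lie in different $\operatorname{Aut}(R)$-orbits'' to ``the two resulting connected sums are non-isomorphic'' is not immediate --- an isomorphism between the two sums need not restrict to an automorphism of $R$, and when $Q_1\cong Q$ the extra symmetry of $Q\#Q\#Q$ permuting all three copies could in principle conflate attachments that look distinct from the $R$ side. The paper sidesteps all of this: it constructs a $Q$ in which every $3$-belt passes through a fixed edge $\{a,b\}$ of $P_1$ (attach each $P_i$ along a face containing $\{a,b\}$), and a $Q'$ in which some $3$-belt avoids the common edge (there are more than $6$ faces of the intermediate polytope when $P_1\ne T_4$, so one of them contains no edge of the existing belt), and the distinction is a concrete combinatorial property that does not need any decomposition theory. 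So your route is a genuinely different one, but it trades the paper's elementary invariant for a canonical-decomposition lemma that still has to be proved.
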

\begin{proof}
  Let $P=P_1 \# \cdots \# P_\ell$ for irreducible polytopes
  $P_1,\ldots,P_\ell$. By Proposition~\ref{thm:unique}, it is enough to show
  that there are two different polytopes in $\C(P_1 \# \cdots \#
  P_\ell)$.

  Let $Q\in\C(P_1 \# \cdots \# P_\ell)$ be a polytope satisfying the following
  condition $(*)$: there is an edge contained in all $3$-belts. We can construct
  such $Q$ as follows.  Let us fix an edge $\{a,b\}$ of $P_1$. Let $Q_1=P_1$ and
  for $2\leq i\leq \ell$, let $Q_i\in\C(Q_{i-1}\# P_i)$ be a polytope obtained
  by attaching a face of $Q_{i-1}$ containing $\{a,b\}$ and a face of
  $P_i$. Then $Q=Q_\ell$ satisfies the condition $(*)$.

  It is sufficient to construct $Q'\in\C(P_1 \# \cdots \# P_\ell)$ which
  does not satisfy the condition $(*)$.

  Assume $\ell=3$. Since $P\ne T_4\# T_4\# T_4$, we can assume that $P_1\ne
  T_4$.  Let $R\in\C(P_1\#P_2)$ and $\{a,b,c\}$ be the unique $3$-belt of
  $R$. Note that $R$ has at least $7$ faces. Since there are only $6$ faces in
  $R$ containing an edge of the $3$-belt $\{a,b,c\}$, we can find a face $F$ of
  $R$ which does not contain any such edge. Let $Q'$ be a polytope in
  $\C(R\#P_3)$ obtained by attaching $F$ and a face of $P_3$. Then the vertices
  of $F$ form a $3$-belt of $Q'$. Clearly $Q'$ does not satisfy the condition
  $(*)$.

 Assume $\ell\geq4$. Let $R\in\C(P_1 \# \cdots \# P_{\ell-1})$ be a
  polytope satisfying the condition $(*)$. Since $R$ has $\ell-2$ $3$-belts,
  there is a unique edge $\{a,b\}$ contained in all $3$-belts of $R$. Let
  $F$ be a face of $R$ which does not contain $\{a,b\}$. Let
  $Q'\in\C(R\#P_\ell)$ be a polytope obtained by attaching $F$ and a face of
  $P_\ell$. Then $Q'$ does not satisfy the condition $(*)$.
\end{proof}

Let $P$ be a polytope which is not necessarily simplicial. The
\emph{first-subdivision}, denoted $\xi_1(P)$, of $P$ is the simplicial polytope
obtained from $P$ by adding one vertex at the center of each face and connecting
it to all vertices of the face. The \emph{second-subdivision} (or
\emph{barycentric subdivision}) denoted $\xi_2(P)$, of $P$ is the simplicial
polytope obtained from $P$ by adding one vertex at the center of each face and
connecting it to all vertices of the face and all mid-points of the edges of the
face.  See Figures~\ref{fig:cube} and \ref{fig:dodeca}.

\begin{figure}
  \centering
  \begin{pspicture}(0,0)(3,1.5) \vput(0,1)0 \vput(0,0)0 \vput(1,0)0
    \vput(1.7,0.5)0 \vput(1.7,1.5)0 \vput(0.7,1.5)0
    \pspolygon(0,1)(0,0)(0,0)(1,0)(1,0)(1.7,0.5)(1.7,0.5)(1.7,1.5)(1.7,1.5)(0.7,1.5)
    \psline(1,1)(0,1) \psline(1,1)(1.7,1.5) \psline(1,1)(1,0)
\end{pspicture}
\begin{pspicture}(0,0)(3,1.5) \vput(0,1)0 \vput(0,0)0 \vput(1,0)0
  \vput(1.7,0.5)0 \vput(1.7,1.5)0 \vput(0.7,1.5)0
  \pspolygon(0,1)(0,0)(0,0)(1,0)(1,0)(1.7,0.5)(1.7,0.5)(1.7,1.5)(1.7,1.5)(0.7,1.5)
  \psline(1,1)(0,1) \psline(1,1)(1.7,1.5) \psline(1,1)(1,0)
  \vput(0.5,0.5)0 \psline(0.5,0.5)(1,1) \psline(0.5,0.5)(0,1)
  \psline(0.5,0.5)(0,0) \psline(0.5,0.5)(1,0) \vput(0.85,1.25)0
  \psline(0.85,1.25)(1,1) \psline(0.85,1.25)(0,1)
  \psline(0.85,1.25)(0.7,1.5) \psline(0.85,1.25)(1.7,1.5)
  \vput(1.35,0.75)0 \psline(1.35,0.75)(1,1) \psline(1.35,0.75)(1,0)
  \psline(1.35,0.75)(1.7,0.5) \psline(1.35,0.75)(1.7,1.5)
\end{pspicture}
\begin{pspicture}(0,0)(2,1.5) \vput(0,1)0 \vput(0,0)0 \vput(1,0)0
  \vput(1.7,0.5)0 \vput(1.7,1.5)0 \vput(0.7,1.5)0
  \pspolygon(0,1)(0,0)(0,0)(1,0)(1,0)(1.7,0.5)(1.7,0.5)(1.7,1.5)(1.7,1.5)(0.7,1.5)
  \psline(1,1)(0,1) \psline(1,1)(1.7,1.5) \psline(1,1)(1,0)
  \vput(0.5,0.5)0 \psline(0.5,0.5)(1,1) \psline(0.5,0.5)(0,1)
  \psline(0.5,0.5)(0,0) \psline(0.5,0.5)(1,0) \vput(0.5,1)0
  \psline(0.5,0.5)(0.5,1) \vput(0,0.5)0 \psline(0.5,0.5)(0,0.5)
  \vput(0.5,0)0 \psline(0.5,0.5)(0.5,0) \vput(1,0.5)0
  \psline(0.5,0.5)(1,0.5) \vput(0.85,1.25)0 \psline(0.85,1.25)(1,1)
  \psline(0.85,1.25)(0,1) \psline(0.85,1.25)(0.7,1.5)
  \psline(0.85,1.25)(1.7,1.5) \vput(0.5,1)0 \psline(0.85,1.25)(0.5,1)
  \vput(0.35,1.25)0 \psline(0.85,1.25)(0.35,1.25) \vput(1.2,1.5)0
  \psline(0.85,1.25)(1.2,1.5) \vput(1.35,1.25)0
  \psline(0.85,1.25)(1.35,1.25) \vput(1.35,0.75)0
  \psline(1.35,0.75)(1,1) \psline(1.35,0.75)(1,0)
  \psline(1.35,0.75)(1.7,0.5) \psline(1.35,0.75)(1.7,1.5)
  \vput(1,0.5)0 \psline(1.35,0.75)(1,0.5) \vput(1.35,0.25)0
  \psline(1.35,0.75)(1.35,0.25) \vput(1.7,1)0
  \psline(1.35,0.75)(1.7,1) \vput(1.35,1.25)0
  \psline(1.35,0.75)(1.35,1.25)
\end{pspicture}
 \caption{$C_8$, $\xi_1(C_8)$ and $\xi_2(C_8)$.}
  \label{fig:cube}
\end{figure}
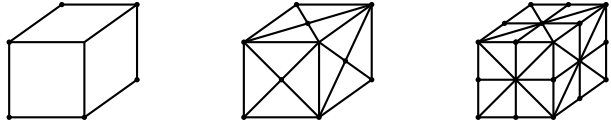

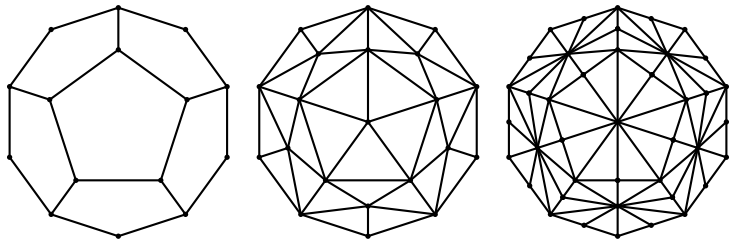
\begin{figure}
\centering
\psset{unit=.8cm}
\begin{pspicture}(-2,-2)(2,2) \vput(1.14127,0.37082)0 \vput(0,1.2)0
  \vput(-1.14127,0.37082)0 \vput(-0.705342,-0.97082)0
  \vput(0.705342,-0.97082)0
  \pspolygon(1.14127,0.37082)(0,1.2)(-1.14127,0.37082)(-0.705342,-0.97082)(0.705342,-0.97082)
  \vput(1.80701,0.587132)0 \vput(1.11679,1.53713)0 \vput(0,1.9)0
  \vput(-1.11679,1.53713)0 \vput(-1.80701,0.587132)0
  \vput(-1.80701,-0.587132)0 \vput(-1.11679,-1.53713)0 \vput(0,-1.9)0
  \vput(1.11679,-1.53713)0 \vput(1.80701,-0.587132)0
  \pspolygon(1.80701,0.587132)(1.11679,1.53713)(0,1.9)(-1.11679,1.53713)(-1.80701,0.587132)(-1.80701,-0.587132)(-1.11679,-1.53713)(0,-1.9)(1.11679,-1.53713)(1.80701,-0.587132)
  \psline(1.14127,0.37082)(1.80701,0.587132) \psline(0,1.2)(0,1.9)
  \psline(-1.14127,0.37082)(-1.80701,0.587132)
  \psline(-0.705342,-0.97082)(-1.11679,-1.53713)
  \psline(0.705342,-0.97082)(1.11679,-1.53713)
\end{pspicture}
\begin{pspicture}(-2,-2)(2,2) \vput(1.14127,0.37082)0 \vput(0,1.2)0
  \vput(-1.14127,0.37082)0 \vput(-0.705342,-0.97082)0
  \vput(0.705342,-0.97082)0
  \pspolygon(1.14127,0.37082)(0,1.2)(-1.14127,0.37082)(-0.705342,-0.97082)(0.705342,-0.97082)
  \vput(1.80701,0.587132)0 \vput(1.11679,1.53713)0 \vput(0,1.9)0
  \vput(-1.11679,1.53713)0 \vput(-1.80701,0.587132)0
  \vput(-1.80701,-0.587132)0 \vput(-1.11679,-1.53713)0 \vput(0,-1.9)0
  \vput(1.11679,-1.53713)0 \vput(1.80701,-0.587132)0
  \pspolygon(1.80701,0.587132)(1.11679,1.53713)(0,1.9)(-1.11679,1.53713)(-1.80701,0.587132)(-1.80701,-0.587132)(-1.11679,-1.53713)(0,-1.9)(1.11679,-1.53713)(1.80701,-0.587132)
  \psline(1.14127,0.37082)(1.80701,0.587132) \psline(0,1.2)(0,1.9)
  \psline(-1.14127,0.37082)(-1.80701,0.587132)
  \psline(-0.705342,-0.97082)(-1.11679,-1.53713)
  \psline(0.705342,-0.97082)(1.11679,-1.53713) \vput(0,0)0
  \psline(0,0)(1.14127,0.37082) \psline(0,0)(0,1.2)
  \psline(0,0)(-1.14127,0.37082) \psline(0,0)(-0.705342,-0.97082)
  \psline(0,0)(0.705342,-0.97082) \vput(0.822899,1.13262)0
  \psline(0.822899,1.13262)(0,1.2)
  \psline(0.822899,1.13262)(1.14127,0.37082)
  \psline(0.822899,1.13262)(1.80701,0.587132)
  \psline(0.822899,1.13262)(1.11679,1.53713)
  \psline(0.822899,1.13262)(0,1.9) \vput(-0.822899,1.13262)0
  \psline(-0.822899,1.13262)(-1.14127,0.37082)
  \psline(-0.822899,1.13262)(0,1.2) \psline(-0.822899,1.13262)(0,1.9)
  \psline(-0.822899,1.13262)(-1.11679,1.53713)
  \psline(-0.822899,1.13262)(-1.80701,0.587132)
  \vput(-1.33148,-0.432624)0
  \psline(-1.33148,-0.432624)(-0.705342,-0.97082)
  \psline(-1.33148,-0.432624)(-1.14127,0.37082)
  \psline(-1.33148,-0.432624)(-1.80701,0.587132)
  \psline(-1.33148,-0.432624)(-1.80701,-0.587132)
  \psline(-1.33148,-0.432624)(-1.11679,-1.53713) \vput(0,-1.4)0
  \psline(0,-1.4)(0.705342,-0.97082)
  \psline(0,-1.4)(-0.705342,-0.97082)
  \psline(0,-1.4)(-1.11679,-1.53713) \psline(0,-1.4)(0,-1.9)
  \psline(0,-1.4)(1.11679,-1.53713) \vput(1.33148,-0.432624)0
  \psline(1.33148,-0.432624)(1.14127,0.37082)
  \psline(1.33148,-0.432624)(0.705342,-0.97082)
  \psline(1.33148,-0.432624)(1.11679,-1.53713)
  \psline(1.33148,-0.432624)(1.80701,-0.587132)
  \psline(1.33148,-0.432624)(1.80701,0.587132)
\end{pspicture}
\begin{pspicture}(-2,-2)(2,2) \vput(1.14127,0.37082)0 \vput(0,1.2)0
  \vput(-1.14127,0.37082)0 \vput(-0.705342,-0.97082)0
  \vput(0.705342,-0.97082)0
  \pspolygon(1.14127,0.37082)(0,1.2)(-1.14127,0.37082)(-0.705342,-0.97082)(0.705342,-0.97082)
  \vput(1.80701,0.587132)0 \vput(1.11679,1.53713)0 \vput(0,1.9)0
  \vput(-1.11679,1.53713)0 \vput(-1.80701,0.587132)0
  \vput(-1.80701,-0.587132)0 \vput(-1.11679,-1.53713)0 \vput(0,-1.9)0
  \vput(1.11679,-1.53713)0 \vput(1.80701,-0.587132)0
  \pspolygon(1.80701,0.587132)(1.11679,1.53713)(0,1.9)(-1.11679,1.53713)(-1.80701,0.587132)(-1.80701,-0.587132)(-1.11679,-1.53713)(0,-1.9)(1.11679,-1.53713)(1.80701,-0.587132)
  \psline(1.14127,0.37082)(1.80701,0.587132) \psline(0,1.2)(0,1.9)
  \psline(-1.14127,0.37082)(-1.80701,0.587132)
  \psline(-0.705342,-0.97082)(-1.11679,-1.53713)
  \psline(0.705342,-0.97082)(1.11679,-1.53713) \vput(0,0)0
  \psline(0,0)(1.14127,0.37082) \psline(0,0)(0,1.2)
  \psline(0,0)(-1.14127,0.37082) \psline(0,0)(-0.705342,-0.97082)
  \psline(0,0)(0.705342,-0.97082) \vput(0.570634,0.78541)0
  \psline(0,0)(0.570634,0.78541) \vput(-0.570634,0.78541)0
  \psline(0,0)(-0.570634,0.78541) \vput(-0.923305,-0.3)0
  \psline(0,0)(-0.923305,-0.3) \vput(0,-0.97082)0
  \psline(0,0)(0,-0.97082) \vput(0.923305,-0.3)0
  \psline(0,0)(0.923305,-0.3) \vput(0.822899,1.13262)0
  \psline(0.822899,1.13262)(0,1.2)
  \psline(0.822899,1.13262)(1.14127,0.37082)
  \psline(0.822899,1.13262)(1.80701,0.587132)
  \psline(0.822899,1.13262)(1.11679,1.53713)
  \psline(0.822899,1.13262)(0,1.9) \vput(0.570634,0.78541)0
  \psline(0.822899,1.13262)(0.570634,0.78541) \vput(1.47414,0.478976)0
  \psline(0.822899,1.13262)(1.47414,0.478976) \vput(1.4619,1.06213)0
  \psline(0.822899,1.13262)(1.4619,1.06213) \vput(0.558396,1.71857)0
  \psline(0.822899,1.13262)(0.558396,1.71857) \vput(0,1.55)0
  \psline(0.822899,1.13262)(0,1.55) \vput(-0.822899,1.13262)0
  \psline(-0.822899,1.13262)(-1.14127,0.37082)
  \psline(-0.822899,1.13262)(0,1.2) \psline(-0.822899,1.13262)(0,1.9)
  \psline(-0.822899,1.13262)(-1.11679,1.53713)
  \psline(-0.822899,1.13262)(-1.80701,0.587132)
  \vput(-0.570634,0.78541)0
  \psline(-0.822899,1.13262)(-0.570634,0.78541) \vput(0,1.55)0
  \psline(-0.822899,1.13262)(0,1.55) \vput(-0.558396,1.71857)0
  \psline(-0.822899,1.13262)(-0.558396,1.71857)
  \vput(-1.4619,1.06213)0 \psline(-0.822899,1.13262)(-1.4619,1.06213)
  \vput(-1.47414,0.478976)0
  \psline(-0.822899,1.13262)(-1.47414,0.478976)
  \vput(-1.33148,-0.432624)0
  \psline(-1.33148,-0.432624)(-0.705342,-0.97082)
  \psline(-1.33148,-0.432624)(-1.14127,0.37082)
  \psline(-1.33148,-0.432624)(-1.80701,0.587132)
  \psline(-1.33148,-0.432624)(-1.80701,-0.587132)
  \psline(-1.33148,-0.432624)(-1.11679,-1.53713)
  \vput(-0.923305,-0.3)0 \psline(-1.33148,-0.432624)(-0.923305,-0.3)
  \vput(-1.47414,0.478976)0
  \psline(-1.33148,-0.432624)(-1.47414,0.478976) \vput(-1.80701,0)0
  \psline(-1.33148,-0.432624)(-1.80701,0) \vput(-1.4619,-1.06213)0
  \psline(-1.33148,-0.432624)(-1.4619,-1.06213)
  \vput(-0.911067,-1.25398)0
  \psline(-1.33148,-0.432624)(-0.911067,-1.25398) \vput(0,-1.4)0
  \psline(0,-1.4)(0.705342,-0.97082)
  \psline(0,-1.4)(-0.705342,-0.97082)
  \psline(0,-1.4)(-1.11679,-1.53713) \psline(0,-1.4)(0,-1.9)
  \psline(0,-1.4)(1.11679,-1.53713) \vput(0,-0.97082)0
  \psline(0,-1.4)(0,-0.97082) \vput(-0.911067,-1.25398)0
  \psline(0,-1.4)(-0.911067,-1.25398) \vput(-0.558396,-1.71857)0
  \psline(0,-1.4)(-0.558396,-1.71857) \vput(0.558396,-1.71857)0
  \psline(0,-1.4)(0.558396,-1.71857) \vput(0.911067,-1.25398)0
  \psline(0,-1.4)(0.911067,-1.25398) \vput(1.33148,-0.432624)0
  \psline(1.33148,-0.432624)(1.14127,0.37082)
  \psline(1.33148,-0.432624)(0.705342,-0.97082)
  \psline(1.33148,-0.432624)(1.11679,-1.53713)
  \psline(1.33148,-0.432624)(1.80701,-0.587132)
  \psline(1.33148,-0.432624)(1.80701,0.587132) \vput(0.923305,-0.3)0
  \psline(1.33148,-0.432624)(0.923305,-0.3) \vput(0.911067,-1.25398)0
  \psline(1.33148,-0.432624)(0.911067,-1.25398)
  \vput(1.4619,-1.06213)0 \psline(1.33148,-0.432624)(1.4619,-1.06213)
  \vput(1.80701,0)0 \psline(1.33148,-0.432624)(1.80701,0)
  \vput(1.47414,0.478976)0
  \psline(1.33148,-0.432624)(1.47414,0.478976)
\end{pspicture}
 \caption{$D_{20}$, $\xi_1(D_{20})$ and $\xi_2(D_{20})$.}
  \label{fig:dodeca}
\end{figure}

Let $P$ be a simplicial polytope. The \emph{type} of a face $F$ of $P$ is
defined to be $\type(F)=(x,y,z)$, where $x,y,z$ are the degrees of the three
vertices of $F$ with $x\geq y\geq z$.  If all faces of $P$ have the same type,
then $P$ is called \emph{face-transitive}\footnote{The usual definition is that
  $P$ is face-transitive if for any two faces $F_1$ and $F_2$ of $P$ there is an
  automorphism on $P$ sending $F_1$ to $F_2$. It is not difficult to see that
  our definition is equivalent to this.}  In this case, we define $\type(P)$ to
be the type of a face of $P$. If $P$ is face-transitive and $\type(P)=(x,x,x)$
for an integer $x$, then $P$ is called \emph{regular}. Note that $T_4$, $O_6$
and $I_{12}$ are the only regular simplicial polytopes.

\begin{lem}\label{lem:face-transitive}
  If $|\C(P\#Q)|=1$ for irreducible polytopes $P$ and $Q$, then one of $P$, $Q$
  is regular and the other is face-transitive.
\end{lem}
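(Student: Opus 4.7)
The plan is to exploit the observation that, in any $R\in\C(P\#Q)$, the three vertices of the glued face form a distinguished $3$-belt of $R$, and the multiset of their degrees in $R$ is an invariant of the isomorphism type. If $\type(F_P)=(a_1,a_2,a_3)$ and $\type(F_Q)=(b_1,b_2,b_3)$, and the identification sends the vertex of degree $a_i$ in $F_P$ to the vertex of degree $b_{\sigma(i)}$ in $F_Q$, then the $i$th neck vertex has degree $a_i+b_{\sigma(i)}-2$ in $R$ (the only common neighbors in $P$ and in $Q$ of a neck vertex are the two other neck vertices). Thus $|\C(P\#Q)|=1$ forces the multiset $\{a_i+b_{\sigma(i)}\}_{i=1}^3$ to be independent of the triple $(F_P,F_Q,\sigma)$.

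First I would fix a pair $(F_P,F_Q)$ and vary $\sigma$ over $S_3$. Comparing the multisets coming from the identity and the transposition $(12)$ reduces to the identity $\{a_1+b_1,a_2+b_2\}=\{a_1+b_2,a_2+b_1\}$, which after expanding is equivalent to $(a_1-a_2)(b_1-b_2)=0$; the transpositions $(13)$ and $(23)$ yield the analogous identities for the other index pairs. A short case check on this system of three equations then shows that it is compatible only if $a_1=a_2=a_3$ or $b_1=b_2=b_3$; in other words, $F_P$ has type $(a,a,a)$ or $F_Q$ has type $(b,b,b)$.

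Since this dichotomy must hold for every pair $(F_P,F_Q)$, either every face of $P$ has three vertices of equal degree, or every face of $Q$ does; say $P$ is the former. Any two faces sharing an edge then force the third vertex of each to have the common degree of that edge's endpoints, and since the $1$-skeleton of $P$ is connected, all its vertex degrees equal some value $a$, so $P$ is regular. With $P$ regular, the neck-degree multiset from any face $F_Q$ of $Q$ of type $(b_1,b_2,b_3)$ becomes $\{a-2+b_1,a-2+b_2,a-2+b_3\}$, which determines $\{b_1,b_2,b_3\}$; invariance of this multiset as $F_Q$ varies thus forces all faces of $Q$ to share the same type, i.e., $Q$ is face-transitive. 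I expect the main subtlety to be the case analysis in the second paragraph, where one must verify that the three conditions $(a_i-a_j)(b_i-b_j)=0$ really do force one of the two triples to be constant, including degenerate situations where the multiset $\{a_i+b_{\sigma(i)}\}$ has repeated elements; everything else is essentially bookkeeping.
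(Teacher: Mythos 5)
Your argument is correct and rests on the same invariant as the paper's proof, namely the multiset of vertex degrees along the unique $3$-belt of $R\in\C(P\#Q)$, but the two arguments decompose the problem differently. The paper first shows that \emph{both} $P$ and $Q$ are face-transitive (asserting, somewhat tersely, that attaching a fixed face of $Q$ to two faces of $P$ of different type yields distinct connected sums), and only then compares two identifications of a single fixed face pair to conclude $a=c$ or $x=z$, hence regularity of one polytope. You instead start locally: fixing $(F_P,F_Q)$ and varying $\sigma$ over $S_3$ yields $(a_i-a_j)(b_i-b_j)=0$ for each pair $i<j$, which (as your deferred ``short case check'' does confirm: if some $a_i\ne a_j$ and some $b_k\ne b_\ell$, say $a_1\ne a_2$ so $b_1=b_2$, then $b_3\ne b_1$ forces $a_1=a_3=a_2$, a contradiction) forces $\type(F_P)$ or $\type(F_Q)$ to be constant. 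Quantifying over all face pairs then forces one polytope, say $P$, to have all faces of constant type, after which connectivity of the $1$-skeleton gives regularity of $P$ and invariance of the neck-degree multiset as $F_Q$ varies gives face-transitivity of $Q$. Your route skips the paper's preliminary step of establishing face-transitivity of both polytopes and ties every distinctness claim directly to the degree multiset, which makes the justification a bit tighter; the paper's version, once face-transitivity is granted, reduces the final comparison to a single pair of sorted type triples, which is slightly more economical.
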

\begin{proof}
  If $P$ is not face-transitive, then $P$ has two faces $F_1$, $F_2$ with
  different types.  Let $F$ be any face of $Q$. Let $P_1$ (resp.~$P_2$) be a
  polytope in $\C(P\#Q)$ obtained by identifying $F_1$ (resp.~$F_2$) with
  $F$. Then $P_1$ and $P_2$ can not be the same, which is a contradiction to
  $|\C(P\#Q)|=1$. Thus $P$ is face-transitive and so is $Q$ by the same
  argument.

  Let $\type(P)=(x,y,z)$ and $\type(Q)=(a,b,c)$. We can identify a face $F'$ of
  $P$ with a face $F''$ of $Q$ in the following two ways: identify the vertices
  of degree $x,y,z$ in $F'$ with (1) the vertices of degree $a,b,c$ in $F''$ and
  (2) the vertices of degree $c,b,a$ in $F''$ respectively. Then the resulting
  polytope has a unique $3$-belt with vertices of degree $x+a-2, y+b-2, z+c-2$
  in the first case and $x+c-2,y+b-2,z+a-2$ in the second case. Since two
  polytopes are the same, we have $x+a-2=x+c-2$ or $x+a-2=z+a-2$. Thus $a=c$ or
  $x=z$. Since $a\geq b\geq c$ and $x\geq y\geq z$, we have $a=b=c$ or $x=y=z$,
  which implies that either $P$ or $Q$ is regular.
\end{proof}

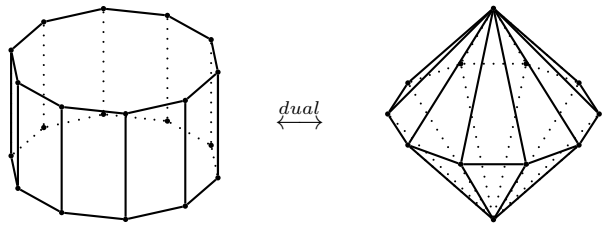
\begin{figure}
  \centering
\psset{unit=20pt}
\begin{pspicture}(-2,-2)(5,2)
\rput(3.5,0){$\stackrel{dual}{\longleftrightarrow}$}
\vput(1.9563,0.792088){x0} \vput(1.9563,-1.20791){y0} \vput(1.33826,0.256855){x1} \vput(1.33826,-1.74314){y1} \vput(0.209057,0.0054781){x2} \vput(0.209057,-1.99452){y2} \vput(-1,0.133975){x3} \vput(-1,-1.86603){y3} \vput(-1.82709,0.593263){x4} \vput(-1.82709,-1.40674){y4} \vput(-1.9563,1.20791){x5} \vput(-1.9563,-0.792088){y5} \vput(-1.33826,1.74314){x6} \vput(-1.33826,-0.256855){y6} \vput(-0.209057,1.99452){x7} \vput(-0.209057,-0.0054781){y7} \vput(1,1.86603){x8} \vput(1,-0.133975){y8} \vput(1.82709,1.40674){x9} \vput(1.82709,-0.593263){y9} \edge{x0}{x1} \edge{x1}{x2} \edge{x2}{x3} \edge{x3}{x4} \edge{x4}{x5} \edge{x5}{x6} \edge{x6}{x7} \edge{x7}{x8} \edge{x8}{x9} \edge{x9}{x0} \edge{y0}{y1} \edge{x0}{y0} \edge{y1}{y2} \edge{x1}{y1} \edge{y2}{y3} \edge{x2}{y2} \edge{y3}{y4} \edge{x3}{y3} \edge{y4}{y5} \edge{x4}{y4} \edge{x5}{y5} \edge{x0}{y0} \psset{linestyle=dotted}\edge{y5}{y6} \edge{x5}{y5} \edge{y6}{y7} \edge{x6}{y6} \edge{y7}{y8} \edge{x7}{y7} \edge{y8}{y9} \edge{x8}{y8} \edge{y9}{y0} \edge{x9}{y9}
\end{pspicture}
\begin{pspicture}(-2,-2)(2,2)
\vput(0,2){x} \vput(0,-2){y} \vput(2,0){0} \vput(1.61803,-0.587785){1} \vput(0.618034,-0.951057){2} \vput(-0.618034,-0.951057){3} \vput(-1.61803,-0.587785){4} \vput(-2,0){5} \vput(-1.61803,0.587785){6} \vput(-0.618034,0.951057){7} \vput(0.618034,0.951057){8} \vput(1.61803,0.587785){9} \edge{8}{-1} \edge{9}{0} \edge{0}{1} \edge{1}{2} \edge{2}{3} \edge{3}{4} \edge{4}{5} \edge{5}{6} \edge{x}{1} \edge{y}{1} \edge{x}{2} \edge{y}{2} \edge{x}{3} \edge{y}{3} \edge{x}{4} \edge{y}{4} \edge{9}{x} \edge{5}{x} \edge{0}{x} \edge{6}{x} \psset{linestyle=dotted} \edge{4}{y} \edge{5}{y} \edge{6}{y} \edge{7}{y} \edge{8}{y} \edge{9}{y} \edge{0}{y} \edge{1}{y} \edge{7}{x} \edge{6}{7} \edge{8}{x} \edge{7}{8} \edge{9}{x} \edge{8}{9}
\end{pspicture}
 \caption{A prism and a bipyramid. They are dual to each other.}
  \label{fig:prism}
\end{figure}

A \emph{prism} is the product of an $n$-gon and an interval. A \emph{bipyramid}
is the dual of a prism. Let $B_n$ denote the bipyramid with $n$ vertices. See
Figure~\ref{fig:prism}.

Fleischner and Imrich \cite[Theorem 3]{Fleischner1979} classified all
face-transitive $3$-dimensional polytopes. The following is a consequence of
their result.

\begin{prop}
  Let $P$ be a face-transitive simplicial polytope. Then $P$ is either
  a bipyramid, a Platonic solid, the first-subdivision of a Platonic
  solid or the second-subdivision of a Platonic solid.
\end{prop}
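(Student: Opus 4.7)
The plan is to apply the Fleischner--Imrich classification of face-transitive $3$-polytopes as a black box and then simply extract its simplicial entries. Their theorem partitions all face-transitive $3$-polytopes into a short list of families: the five Platonic solids, the bipyramids (dual to prisms), and certain constructions (``stellation''/``kis''/barycentric subdivision) applied to the Platonic solids. The work reduces to identifying, family by family, which members are simplicial and matching them with the operations $\xi_1$, $\xi_2$ used in this paper.

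Two families are immediate. Every bipyramid $B_n$ already has only triangular faces, so all of $\{B_n\}_{n\ge 5}$ belongs to the list. Among the five Platonic solids only $T_4$, $O_6$, $I_{12}$ are simplicial, which accounts for the regular simplicial entries.

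For the remaining subdivision entries I would argue by computing triangle types. Let $P$ be a convex $3$-polytope in which every face is a $k$-gon and every vertex has degree $d$. By construction, each triangle of $\xi_1(P)$ consists of one face-centre of degree $k$ and two original vertices of degree $2d$, so every triangle has the single type $(2d,2d,k)$; hence $\xi_1(P)$ is face-transitive. Similarly, each triangle of $\xi_2(P)$ uses one face-centre (degree $2k$), one edge-midpoint (degree $4$) and one original vertex (degree $2d$), giving the common type $(2k,2d,4)$. Conversely, if $\xi_i(P)$ is face-transitive then the two parameters $d$ and $k$ must be constant over the vertices and faces of $P$, i.e.\ $P$ is both vertex- and face-transitive, which for $3$-polytopes forces $P$ to be a Platonic solid. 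Thus the subdivision entries in the Fleischner--Imrich list that happen to be simplicial are exactly $\xi_1(P_0)$ and $\xi_2(P_0)$ for the five Platonic solids $P_0$.

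The main obstacle is a dictionary: Fleischner--Imrich describe their operations using their own terminology (stellations/truncations of regular polyhedra), so one must verify that applying those operations to a Platonic solid yields combinatorially the same polytope as our $\xi_1$ or $\xi_2$. This is a routine comparison once one matches triangle types and vertex-degree sequences as above, and once the dictionary is fixed there is nothing left in their classification outside the four families listed in the proposition.
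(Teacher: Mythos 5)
The paper offers no proof at all---it cites \cite[Theorem 3]{Fleischner1979} and declares the proposition a consequence---and your plan does exactly the same thing, with some added sanity checks. Your degree/type calculations for $\xi_1(P)$ and $\xi_2(P)$ are correct (face centres of degree $k$ resp.\ $2k$, midpoints of degree $4$, original vertices of degree $2d$), but note that these establish face-transitivity of the subdivisions, which is the \emph{converse} of what the proposition asserts and is not needed. The actual content is the ``dictionary'' matching the simplicial entries of the Fleischner--Imrich list to the four families of the statement; their list also contains trapezohedra (duals of antiprisms) and Catalan solids that are not triangulations (rhombic dodecahedron, deltoidal icositetrahedron, etc.), so one must explicitly check that every \emph{simplicial} entry is a bipyramid, a Platonic solid, or a $\xi_1$/$\xi_2$ of a Platonic solid. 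That is exactly the step you leave open; the paper leaves it implicit too, so the two arguments sit at the same level of completeness.
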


One can check that $\xi_2(T_4) = \xi_1(C_8)$, $\xi_2(C_8) = \xi_2(O_6)$ and
$\xi_2(D_{20})=\xi_2(I_{12})$. Note that $\xi_1(T_4)$, $\xi_1(O_6)$ and
$\xi_1(I_{12})$ are reducible. Thus we get Theorem~\ref{thm:necessary}.

\section{The maximum of $b_{n-4}(P)$ for irreducible polytopes} \label{sec:max}

Let $P$ be a polytope with $n$ vertices. Since $b_{n-3}(P)$ is the number of
$3$-belts, we get the following.

\begin{prop}
  Let $P_1,\ldots,P_\ell$ be irreducible polytopes and let
  $P\in\C(P_1\#\cdots\#P_\ell)$.  If $P$ has $n$ vertices, then
  $b_{n-3}(P)=\ell-1$.
\end{prop}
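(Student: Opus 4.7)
The plan is to reduce the claim to the purely combinatorial statement that $b_{n-3}(P)$ equals the number of $3$-belts of $P$, and then to count $3$-belts by induction on the number of summands $\ell$.

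For the first reduction I would apply formula~\eqref{eq:bk} with $i=n-3$, so the summation is indexed by the $3$-element complements $V(P)\setminus W=\{u,v,w\}$. By Steinitz's theorem the $1$-skeleton of $P$ is a $3$-connected planar triangulation of $S^2$, so the induced subgraph on $W$ can be disconnected only when $\{u,v,w\}$ is a minimal vertex separator. In such a triangulation the minimal $3$-separators are exactly the $3$-cycles: inspecting the link of each vertex in the separator, one sees that the other two separator vertices must be adjacent to it. By the Jordan curve theorem each such $3$-cycle bounds two closed disks on $S^2$, and removing the three vertices splits $P|_W$ into exactly two components precisely when both disks contain further vertices, i.e., precisely when $\{u,v,w\}$ is a non-facial $3$-cycle, i.e., a $3$-belt. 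Thus $\cc(P|_W)-1$ equals $1$ for each $3$-belt and $0$ otherwise, and $b_{n-3}(P)$ equals the number of $3$-belts of $P$.

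Second, I would prove by induction on $\ell$ that any $P\in\C(P_1\#\cdots\#P_\ell)$ with each $P_i$ irreducible has exactly $\ell-1$ $3$-belts. The base case $\ell=1$ is immediate from the remark already recorded in the excerpt that a $3$-polytope has a $3$-belt if and only if it is reducible. For the inductive step, write $P=Q\#P_\ell$ with $Q\in\C(P_1\#\cdots\#P_{\ell-1})$ glued along a triangle $F$. Each of the $\ell-2$ $3$-belts of $Q$ remains a $3$-belt of $P$, because $P_\ell$ attaches to a single side of each such separator. The gluing triangle $F$ becomes a new $3$-belt in $P$: it is no longer a facet, and removing its vertices separates $P$ into the remainder of $Q$ and the remainder of $P_\ell$. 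No further $3$-belts arise, since no edge of $P$ joins $V(Q)\setminus F$ to $V(P_\ell)\setminus F$, so every $3$-cycle of $P$ lies entirely inside one of the two sides and is therefore already a $3$-cycle of $Q$ or of $P_\ell$. This yields $(\ell-2)+1=\ell-1$.

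The main obstacle is the first step: the graph-theoretic/topological fact that among all $\binom{n}{3}$ subsets of three vertices, exactly the $3$-belts contribute to $b_{n-3}(P)$, and each with multiplicity one. Once this characterization is in hand, the second step is essentially bookkeeping based on the structure of connected sums.
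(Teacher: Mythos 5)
Your proof is correct and follows the same two-step outline that the paper uses but leaves implicit: first, that $b_{n-3}(P)$ equals the number of $3$-belts (each contributing exactly $1$), and second, that a connected sum of $\ell$ irreducible summands has exactly $\ell-1$ separating triangles. You supply the justifications the paper skips, in particular that every $3$-vertex cut of a simplicial $3$-polytope is a separating triangle splitting the graph into exactly two pieces, which is what makes $\cc(P|_W)-1\in\{0,1\}$ for $|W|=n-3$.
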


We will first find an upper bound of $b_{n-4}(P)$ for an irreducible polytope
$P$ with $n$ vertices, which is established when $P=B_n$, the bipyramid with
$n$ vertices. Our first step is to find $b_k(B_n)$.

\begin{prop}
For $k\leq n-3$, we have
$$b_k(B_n) =\frac{(n-2)(k-1)}{n-2-k}\binom{n-4}{k}+\delta_{k,2}.$$
\end{prop}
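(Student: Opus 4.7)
The plan is to compute $b_k(B_n)$ directly from the combinatorial formula \eqref{eq:bk} by partitioning the $k$-subsets $W\subset V(B_n)$ according to how many of the two apices of $B_n$ they contain. Recall that $B_n$ has two apex vertices $x,y$ which are not adjacent to each other but are each joined to all $n-2$ equator vertices, and the equator vertices themselves form an $(n-2)$-cycle.

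First I would dispose of the easy cases. If $W$ contains exactly one apex, then every other vertex of $W$ is adjacent to that apex, so $B_n|_W$ is connected and contributes $0$. If $W$ contains both $x$ and $y$ and $k\geq 3$, then $W$ also contains an equator vertex, which is adjacent to both apices, so $B_n|_W$ is again connected. The only exceptional case is $k=2$ with $W=\{x,y\}$, which has two components; this accounts exactly for the $\delta_{k,2}$ term.

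The main case is when $W$ is contained in the equator. Writing $N=n-2$, I need to compute $\sum_W(\cc(B_n|_W)-1)$ where $W$ ranges over $k$-subsets of the $N$-cycle. I would invoke the classical formula (proved by an arc/gap composition argument) that the number of $k$-subsets of an $N$-cycle whose induced subgraph has exactly $j$ components is
$$c(N,k,j)=\frac{N}{j}\binom{k-1}{j-1}\binom{N-k-1}{j-1},$$
valid for $1\leq j\leq\min(k,N-k)$; the hypothesis $k\leq n-3$ guarantees $k\leq N-1$, so the formula applies throughout. The contribution is then $\sum_{j}(j-1)c(N,k,j)=\sum_{j}j\cdot c(N,k,j)-\binom{N}{k}$. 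The cancellation $j\cdot c(N,k,j)=N\binom{k-1}{j-1}\binom{N-k-1}{j-1}$ turns the first sum into $N\binom{N-2}{k-1}$ by Vandermonde's identity, giving a total of $N\binom{N-2}{k-1}-\binom{N}{k}$.

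Finally I would simplify this expression to the stated closed form using the identity
$$N\binom{N-2}{k-1}-\binom{N}{k}=\frac{N(k-1)}{N-k}\binom{N-2}{k},$$
which follows from the algebraic factoring $k(N-k)-(N-1)=(k-1)(N-k-1)$. Substituting $N=n-2$ yields $\frac{(n-2)(k-1)}{n-2-k}\binom{n-4}{k}$, and adding the $\delta_{k,2}$ term completes the proof. The main obstacle is really just the final algebraic simplification and keeping careful track of the summation ranges; the combinatorial input reduces to the standard formula for $c(N,k,j)$.
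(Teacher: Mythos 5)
Your proposal is correct and uses the same reduction as the paper: analyze $W$ by how many of the two apices it contains, observe that any $W$ meeting an apex gives a connected $B_n|_W$ except for $W=\{x,y\}$ (which yields the $\delta_{k,2}$), and reduce to $k$-subsets of the $(n-2)$-cycle. The only difference is that where the paper simply cites Bruns--Hibi~\cite[Example~2.1.(c)]{Br-Hi-1998} (see also~\cite[Corollary~3.7]{Ch-Ki-2010}) for the cycle contribution, you derive it from scratch via the arc/gap count $c(N,k,j)=\frac{N}{j}\binom{k-1}{j-1}\binom{N-k-1}{j-1}$, Vandermonde, and the factorization $k(N-k)-(N-1)=(k-1)(N-k-1)$; I checked these steps and they are correct, so your version is a self-contained expansion of the paper's one-line citation.
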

\begin{proof}
  Observe that $B_n$ is the graph obtained from an $(n-2)$-gon by adding two
  vertices $v$, $u$ connected to all vertices of the $(n-2)$-gon. Let $W$ be a
  set of $k$ vertices.  If $v\in W$ or $u\in W$, then $B_n|_W$ is connected
  unless $k=2$ and $W=\{u,v\}$.  Thus it is sufficient to consider the vertices
  in the $(n-2)$-gon. Then it follows from the result in
  \cite[Example~2.1.(c)]{Br-Hi-1998}; see also \cite[Corollary~3.7]{Ch-Ki-2010}.
\end{proof}

\begin{table}
\psset{unit=4pt}
\centering
\begin{tabular}{|c|c|c|c|c|c|c|c|c|c|c|} \hline
$n$ & $4$ & $6$ & $7$ &
\multicolumn{2}{c|}{$8$} &
\multicolumn{5}{c|}{$9$} \\
\hline
$P^*$
& \raisebox{-9pt}{\begin{pspicture}(0,-1)(6,6.5)
\rput{180}(3,2.4){\rput(-3,-3){\pspolygon(3,0)(0.4,4.5)(5.6,4.5) \psline(0.4,4.5)(3,3)
\psline(3,0)(3,3) \psline(5.6,4.5)(3,3)}}
\end{pspicture}}
& \raisebox{-9pt}{\begin{pspicture}(0,-1)(6,6.5)
\pspolygon(5.12,5.12)(0.88,5.12)(0.88,0.88)(5.12,0.88)
\pspolygon(4.06,4.06)(1.94,4.06)(1.94,1.94)(4.06,1.94)
\psline(5.12,5.12)(4.06,4.06) \psline(0.88,5.12)(1.94,4.06)
\psline(0.88,0.88)(1.94,1.94) \psline(5.12,0.88)(4.06,1.94)
\end{pspicture}}
& \raisebox{-9pt}{\begin{pspicture}(0,-1)(6,6.5)
\pspolygon(3,6)(0.15,3.93)(1.24,0.57)(4.76,0.57)(5.85,3.93)
\pspolygon(3,4.5)(1.57,3.46)(2.12,1.79)(3.88,1.79)(4.43,3.46)
\psline(3,6)(3,4.5) \psline(0.15,3.93)(1.57,3.46)
\psline(1.24,0.57)(2.12,1.79) \psline(4.76,0.57)(3.88,1.79)
\psline(5.85,3.93)(4.43,3.46)
\end{pspicture}}
& \raisebox{-9pt}{\begin{pspicture}(0,-1)(6,6.5)
\pspolygon(0,3)(1.5,0.4)(4.5,0.4)(6,3)(4.5,5.6)(1.5,5.6)
\pspolygon(1.5,3)(2.25,1.7)(3.75,1.7)(4.5,3)(3.75,4.3)(2.25,4.3)
\psline(0,3)(1.5,3) \psline(1.5,0.4)(2.25,1.7)
\psline(4.5,0.4)(3.75,1.7) \psline(6,3)(4.5,3)
\psline(4.5,5.6)(3.75,4.3) \psline(1.5,5.6)(2.25,4.3)
\end{pspicture}}
& \raisebox{-9pt}{\begin{pspicture}(0,-1)(6,6.5)
\pspolygon(3,6)(0.15,3.93)(1.24,0.57)(4.76,0.57)(5.85,3.93)
\pspolygon(3,4.5)(1.57,3.46)(2.12,2.09)(3.88,2.09)(4.43,3.46)
\pspolygon(2.12,2.09)(3.88,2.09)(3.88,1.49)(2.12,1.49)
\psline(3,6)(3,4.5) \psline(0.15,3.93)(1.57,3.46)
\psline(1.24,0.57)(2.12,1.49) \psline(4.76,0.57)(3.88,1.49)
\psline(5.85,3.93)(4.43,3.46)
\end{pspicture}}
& \raisebox{-9pt}{\begin{pspicture}(0,-1)(6,6.5)
\pspolygon(3,6)(0.65,4.87)(0.08,2.33)(1.69,0.30)(4.31,0.30)(5.92,2.33)(5.35,4.87)
\pspolygon(3,4.5)(1.83,3.93)(1.54,2.67)(2.35,1.65)(3.65,1.65)(4.46,2.67)(4.17,3.93)
\psline(3,6)(3,4.5) \psline(0.65,4.87)(1.83,3.93)
\psline(0.08,2.33)(1.54,2.67) \psline(1.69,0.30)(2.35,1.65)
\psline(4.31,0.30)(3.65,1.65) \psline(5.92,2.33)(4.46,2.67)
\psline(5.35,4.87)(4.17,3.93)
\end{pspicture}}
& \raisebox{-9pt}{\begin{pspicture}(0,-1)(6,6.5)
\pspolygon(0,3)(1.5,0.4)(4.5,0.4)(6,3)(4.5,5.6)(1.5,5.6)
\pspolygon(1,3)(1.5,3.87)(2,3)(1.5,2.13)
\pspolygon(4,3)(4.5,3.87)(5,3)(4.5,2.13) \psline(2,3)(4,3)
\psline(0,3)(1,3) \psline(5,3)(6,3) \psline(1.5,0.4)(1.5,2.13)
\psline(1.5,5.6)(1.5,3.87) \psline(4.5,0.4)(4.5,2.13)
\psline(4.5,5.6)(4.5,3.87)
\end{pspicture}}
& \raisebox{-9pt}{\begin{pspicture}(0,-1)(6,6.5)
\pspolygon(0,3)(1.5,0.4)(4.5,0.4)(6,3)(4.5,5.6)(1.5,5.6)
\pspolygon(1.5,3)(2.25,2)(3.75,2)(4.5,3)(3.75,4.3)(2.25,4.3)
\pspolygon(2.25,1.4)(3.75,1.4)(3.75,2)(2.25,2) \psline(0,3)(1.5,3)
\psline(1.5,0.4)(2.25,1.4) \psline(4.5,0.4)(3.75,1.4)
\psline(6,3)(4.5,3) \psline(4.5,5.6)(3.75,4.3)
\psline(1.5,5.6)(2.25,4.3)
\end{pspicture}}
& \raisebox{-9pt}{\begin{pspicture}(0,-1)(6,6.5)
\pspolygon(0,3)(1.5,0.4)(4.5,0.4)(6,3)(4.5,5.6)(1.5,5.6)
\pspolygon(1.5,3)(2.25,1.7)(3,2)(3.75,1.7)(4.5,3)(3.75,4.3)(3,4)(2.25,4.3)
\psline(3,2)(3,4) \psline(0,3)(1.5,3) \psline(1.5,0.4)(2.25,1.7)
\psline(4.5,0.4)(3.75,1.7) \psline(6,3)(4.5,3)
\psline(4.5,5.6)(3.75,4.3) \psline(1.5,5.6)(2.25,4.3)
\end{pspicture}}
& \raisebox{-9pt}{\begin{pspicture}(0,-1)(6,6.5)
\pspolygon(3,6)(0.15,3.93)(1.24,0.57)(4.76,0.57)(5.85,3.93)
\pspolygon(3,4.5)(2.49,3.70)(1.57,3.46)(2.12,1.79)(3,2.24)(3.88,1.79)(4.43,3.46)(3.52,3.66)
\psline(3,6)(3,4.5) \psline(0.15,3.93)(1.57,3.46)
\psline(1.24,0.57)(2.12,1.79) \psline(4.76,0.57)(3.88,1.79)
\psline(5.85,3.93)(4.43,3.46) \psline(3,3)(3,2.24)
\psline(3,3)(2.49,3.70) \psline(3,3)(3.52,3.66)
\end{pspicture}}
\\  \hline
$b_{n-4}(P)$ & $-1$ & $3$ & $5$ & $9$ & $5$ & $14$ & $12$ & $8$ & $6$ & $3$
\\  \hline
\end{tabular}
\caption{The complete list of simple polytopes $P^*$ with $n$
faces
for $n\leq 9$ such that $P$ is an irreducible simplicial polytope
with $n$ vertices,
and the  numbers $b_{n-4}(P)$.} \label{table:list}
\end{table}

Choi et al. \cite{Ch-Pa-Su-2010} computed the graded Betti numbers of all
simplicial polytopes with at most $9$ vertices. We need some of their result as
shown in Table~\ref{table:list}.

\begin{thm}\label{thm:irr n-4}
  Let $n\geq 4$ be a fixed integer.  Let $P$ be an irreducible polytope with $n$
  vertices.  Then
$$b_{n-4}(P)\leq \binom{n-3}{2}-1 + \delta_{n,6}.$$ The equality
holds if and only if $P=B_n$ or $P=T_4$.
\end{thm}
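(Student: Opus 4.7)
The plan is to identify $b_{n-4}(P)$ with the number $\sigma(P)$ of induced $4$-cycles in the graph of $P$, and then bound $\sigma(P)$ by $\binom{n-3}{2}-1+\delta_{n,6}$ via a double-counting argument.

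For the reduction, applying \eqref{eq:bk} gives
\[
b_{n-4}(P)=\sum_{|U|=4}(\cc(P|_{V\setminus U})-1).
\]
The case $n=4$ is immediate ($P=T_4$ and $b_0(T_4)=-1$ with the convention $\cc(\emptyset)=0$, matching $\binom{1}{2}-1$). For $n\geq 6$, irreducibility forces $P$ to be $4$-connected: in the triangulation of $S^2$ given by $P$, any $3$-vertex cut is realized by a non-facial $3$-cycle (a $3$-belt), which is excluded. Fix a $4$-set $U\subset V(P)$. If $U$ is not a cut the summand is $0$; otherwise, since the minimum cut in a $4$-connected planar triangulation is a cycle, $U$ forms a separating $4$-cycle $C$ on $S^2$, bounding two disks each with at least one interior vertex. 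For any component $A$ of $P|_{V\setminus U}$, the set of vertices outside $A$ adjacent to $A$ is contained in $U$ and has size $\geq 4$ by $4$-connectivity, hence equals $U$. If two components $A_1,A_2$ lay on the same disk, contracting $A_1$ to a point in that disk would slice it into four regions each bounded by only two of the four vertices of $U$, trapping the connected $A_2$ in one region and forcing it to have at most two external neighbors, a contradiction. Hence $\cc(P|_{V\setminus U})=2$ exactly, so each $4$-cut contributes $1$. Moreover, a chord of $C$ would bisect one of its disks into two triangles; since that disk contains an interior vertex, at least one triangle would be non-facial, giving a $3$-belt that contradicts irreducibility. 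Thus every separating $4$-cycle is induced, and conversely any induced $4$-cycle in a triangulation separates (a chord-free $4$-cycle must bound two disks each containing an interior vertex, else a diagonal would be needed to triangulate the empty side). Therefore
\[
b_{n-4}(P)=\sigma(P):=\#\{\text{induced }4\text{-cycles of }P\}.
\]

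Next, for the bound on $\sigma(P)$, note that each induced $4$-cycle has exactly two pairs of opposite (non-adjacent) vertices, giving the double-counting identity
\[
2\sigma(P)=\sum_{\{x,y\}\notin E(P)}\varphi(x,y),\qquad \varphi(x,y):=\binom{|N(x)\cap N(y)|}{2}-e_P(N(x)\cap N(y)),
\]
where $N(v)$ is the neighborhood of $v$ in $P$ and $e_P(S)$ is the number of edges induced on $S\subset V(P)$. By Euler's formula $|E(P)|=3n-6$, the number of non-adjacent vertex pairs equals $\binom{n}{2}-(3n-6)=\binom{n-3}{2}$, so the desired bound is equivalent to $\sum\varphi\leq 2\binom{n-3}{2}-2+2\delta_{n,6}$. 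This value is attained by $P=B_n$: the apex pair contributes $\varphi(u,v)=\frac{(n-2)(n-5)}{2}$, each of the $\frac{(n-2)(n-5)}{2}$ non-adjacent base pairs contributes $\varphi=1$, and the base $4$-cycle of $B_6=O_6$ accounts for the extra $\delta_{n,6}$ term when $n=6$.

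The main obstacle is to control $\varphi(x,y)$ when $|N(x)\cap N(y)|$ is large (as it is for the apex pair of $B_n$, where it equals $n-2$). The key local fact is that any two common neighbors of $x$ that are consecutive in the link cycle of $x$ must be adjacent in $P$ (they span a face with $x$), which lower-bounds $e_P(N(x)\cap N(y))$ in terms of $|N(x)\cap N(y)|$ and the gap structure of $N(x)\cap N(y)$ on the link of $x$. Combining this with the analogous bound at $y$ and carrying out a case analysis on the maximum of $|N(x)\cap N(y)|$ over all non-adjacent pairs $\{x,y\}$ identifies $B_n$ (and $T_4$ when $n=4$) as the unique extremizers, completing the proof.
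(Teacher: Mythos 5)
Your approach is genuinely different from the paper's: the paper proves the bound by induction on $n$, selecting a $4$-belt, contracting the two sides to single vertices to produce smaller polytopes $P_1,P_2$ with $n_1+n_2=n+6$ and $n_1,n_2\ge 7$, and then carefully bookkeeping the $4$-belts of $P$ against those of $P_1,P_2$ plus correction terms $(a_i-1)(a_{j}-1)$. Your route — reducing $b_{n-4}$ to the count $\sigma(P)$ of induced $4$-cycles and then double-counting over non-edges via $\varphi(x,y)=\binom{|N(x)\cap N(y)|}{2}-e_P(N(x)\cap N(y))$ — is a legitimately different strategy, and the identity $2\sigma(P)=\sum_{\{x,y\}\notin E}\varphi(x,y)$ is correct, as is the accounting that $B_n$ saturates the claimed bound (with the $\delta_{n,6}$ correction coming from the base $4$-gon of $O_6$ being itself an induced $4$-cycle).

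However, there is a genuine gap at exactly the point where the work has to be done. The inequality you need, namely
\[
\sum_{\{x,y\}\notin E(P)}\varphi(x,y)\ \le\ 2\binom{n-3}{2}-2+2\delta_{n,6},
\]
is never established. Since the number of non-edges is $\binom{n-3}{2}$ while a single pair (the apex pair of $B_n$) already contributes $\varphi=\frac{(n-2)(n-5)}{2}$, no pointwise bound on $\varphi$ suffices; one needs a global redistribution argument, and your proposal only gestures at this (``the key local fact\ldots carrying out a case analysis\ldots completing the proof'') without actually carrying it out or showing it converges to the right constant. The uniqueness claim (equality iff $P=B_n$ or $T_4$) is likewise asserted, not derived. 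Separately, the reduction $b_{n-4}(P)=\sigma(P)$ leans on the facts that in a $4$-connected plane triangulation every $4$-element vertex cut induces a chordless $4$-cycle and that such a cut separates $V\setminus U$ into exactly two components; these are true but deserve citation or proof rather than a one-line appeal, especially since the paper's own argument sidesteps them entirely by working with explicit $4$-belts. As it stands, the proposal establishes the framework for an alternative proof but leaves the central quantitative estimate unproven.
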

\begin{proof}
  Induction on $n$. By Table~\ref{table:list}, it is true for $n\leq 7$.  Assume
  that $n\geq 8$ and it is true for all integers less than $n$.  Since $P$ is
  irreducible, $b_{n-4}(P)$ is the number of $4$-belts of $P$. If there is no
  4-belt in $P$, the theorem is true since $b_{n-4}(P)=0$ and $P$ is not a
  bipyramid. Otherwise, take a 4-belt $B=\{v_1,v_2,v_3,v_4\}$ such that $v_i$ is
  connected to $v_{i+1}$ for $i=1,2,3,4$, where $v_5=v_1$.

Now assume that the graph $P$ is embedded in a plane. Let $V$ be the vertex set
of $P$. There are two connected components in $P|_{V\setminus B}$. Let $X_1$
(resp.~$X_2$) be the set of vertices in the connected component in
$P|_{V\setminus B}$ which is outside (resp.~inside) of the $4$-gon consisting of
the vertices in $B$.

We can assume that $|X_1|>1$ and $|X_2|>1$, because otherwise we can assign the
unique vertex of $X_1$ or $X_2$ to $B$, which implies that $b_{n-4}$ is less
than the number of vertices, and hence, $b_{n-4}(P)\leq n < \binom{n-3}2-1$.
For $i=1,2$, let $P_i$ be the polytope obtained from $P$ by contracting all
vertices in $X_i$ to a single vertex $x_i$. Note that $x_i$ is connected to all
vertices of $B$ in $P_i$.

Let $n_1$ and $n_2$ be the number of vertices of $P_1$ and $P_2$
respectively. Then $n_1+n_2=n+6$ and $n_1,n_2\geq 7$.

For $i=1,2$, let $A_i$ (resp.~$B_i$) be the set of vertices $u$ of $P_i$ such
that $\{x_i,v_1,v_3,u\}$ (resp.~$\{x_i,v_2,v_4,u\}$) is a $4$-belt. We claim
that $A_1 =\emptyset$ or $B_1 =\emptyset$. Assume that both $A_1$ and $B_1$ are
nonempty. Note that $x_1$ is the only vertex in $P_1$ which lies outside of the
$4$-gon $\{v_1,v_2,v_3,v_4\}$. If $u\in A_1$ and $u'\in B_1$, then $\{u,v_1\}$,
$\{u,v_3\}$, $\{u',v_2\}$ and $\{u',v_4\}$ are edges. Since $P_1$ is a planar
graph, we must have $u=u'$. Then the edges $\{u,v_i\}$ for $i=1,2,3,4$ divide
the $4$-gon $\{v_1,v_2,v_3,v_4\}$ into four triangular regions.  If we have a vertex inside of the
triangle $\{u,v_i,v_{i+1}\}$, then this forms a $3$-belt of $P_1$, and thus of
$P$. Thus we do not have any vertex except $u$ inside of the $4$-gon
$\{v_1,v_2,v_3,v_4\}$. Then we get $n_1=6$ which is a contradiction. Thus we
have $A_1 =\emptyset$ or $B_1 =\emptyset$. For the same reason, we also have
$A_2 =\emptyset$ or $B_2 =\emptyset$.

Let $a_i=|A_i|$ and $b_i=|B_i|$ for $i=1,2$. Then we have
\begin{align*}
b_{n-4}(P)&=b_{n_1-4}(P_1)+b_{n_1-4}(P_1)-1-a_1-a_2-b_1-b_2+a_1a_2+b_1b_2\\
&=b_{n_1-4}(P_1)+b_{n_1-4}(P_1)+(a_1-1)(a_2-1)+(b_1-1)(b_2-1)-3.
\end{align*}
Since $a_1+b_1\leq n_1-5$ and $a_2+b_2\leq n_2-5$, we have
$(a_1-1)(a_2-1)+(b_1-1)(b_2-1)\leq (n_1-6)(n_2-6)+1$, where the equality holds
if and only if $(a_1,b_1,a_2,b_2)$ is equal to $(n_1-5,0,n_2-5,0)$ or
$(0,n_1-5,0,n_2-5)$.  Note that $a_i=n_i-5$ or $b_i=n_i-5$ if and only if $P_i$
is a bipyramid. Moreover, $(a_1,b_1,a_2,b_2)$ is equal to $(n_1-5,0,n_2-5,0)$ or
$(0,n_1-5,0,n_2-5)$ if and only if $P=B_n$.

Since $n_1,n_2<n$, by the
induction hypothesis, we get
\begin{align*}
b_{n-4}(P)&\leq \binom{n_1-3}2-1+\binom{n_2-3}2-1 +(n_1-6)(n_2-6)+1-3\\
&=\binom{n-3}2-1,
\end{align*}
where the equality holds if and only if $P=B_n$.
\end{proof}

Using a similar argument, we can find the second largest value of
$b_{n-4}(P)$ for an irreducible polytope $P$ with $n$ vertices.

\begin{figure}
  \centering
\psset{unit=20pt}
\begin{pspicture}(-2,-2)(5,2)
\rput(3.5,0){$\stackrel{dual}{\longleftrightarrow}$}
\vput(1.9563,0.792088){x0} \vput(1.9563,-1.20791){y0}
\vput(1.33826,0.256855){x1} \vput(1.33826,-1.74314){y1}
\vput(0.434898,0.0557535){x2} \vput(-1.16542,0.225833){x3}
\vput(0.209057,-.3445219){xx2} \vput(-1,-.216025){xx3} \edge{xx2}{x2}
\edge{xx3}{x3} \edge{xx3}{xx2} \vput(0.209057,-1.99452){y2}
\vput(-1,-1.86603){y3} \vput(-1.82709,0.593263){x4}
\vput(-1.82709,-1.40674){y4} \vput(-1.9563,1.20791){x5}
\vput(-1.9563,-0.792088){y5} \vput(-1.33826,1.74314){x6}
\vput(-1.33826,-0.256855){y6} \vput(-0.209057,1.99452){x7}
\vput(-0.209057,-0.0054781){y7} \vput(1,1.86603){x8}
\vput(1,-0.133975){y8} \vput(1.82709,1.40674){x9}
\vput(1.82709,-0.593263){y9} \edge{x0}{x1} \edge{x1}{x2} \edge{x2}{x3}
\edge{x3}{x4} \edge{x4}{x5} \edge{x5}{x6} \edge{x6}{x7} \edge{x7}{x8}
\edge{x8}{x9} \edge{x9}{x0} \edge{y0}{y1} \edge{x0}{y0} \edge{y1}{y2}
\edge{x1}{y1} \edge{y2}{y3} \edge{xx2}{y2} \edge{y3}{y4}
\edge{xx3}{y3} \edge{y4}{y5} \edge{x4}{y4} \edge{x5}{y5} \edge{x0}{y0}
\psset{linestyle=dotted}\edge{y5}{y6} \edge{x5}{y5} \edge{y6}{y7}
\edge{x6}{y6} \edge{y7}{y8} \edge{x7}{y7} \edge{y8}{y9} \edge{x8}{y8}
\edge{y9}{y0} \edge{x9}{y9}
\end{pspicture}
\begin{pspicture}(-2,-2)(2,2)
\vput(0,2){x} \vput(0,-2){y} \vput(2,0){0} \vput(1.61803,-0.587785){1}
\vput(0.618034,-0.951057){2} \vput(-0.618034,-0.951057){3}
\vput(-1.61803,-0.587785){4} \vput(-2,0){5}
\vput(-1.61803,0.587785){6} \vput(-0.618034,0.951057){7}
\vput(0.618034,0.951057){8} \vput(1.61803,0.587785){9}
\vput(-.3,.5){z}
\edge zx \edge z4 \edge z3 \edge z2
\edge{8}{-1} \edge{9}{0} \edge{0}{1} \edge{1}{2} \edge{2}{3}
\edge{3}{4} \edge{4}{5} \edge{5}{6} \edge{x}{1} \edge{y}{1}
\edge{x}{2} \edge{y}{2} \edge{y}{3} \edge{x}{4} \edge{y}{4}
\edge{9}{x} \edge{5}{x} \edge{0}{x} \edge{6}{x}
\psset{linestyle=dotted}
\edge{4}{y} \edge{5}{y} \edge{6}{y} \edge{7}{y} \edge{8}{y}
\edge{9}{y} \edge{0}{y} \edge{1}{y} \edge{7}{x} \edge{6}{7}
\edge{8}{x} \edge{7}{8} \edge{9}{x} \edge{8}{9}
\end{pspicture}
\caption{An edge-cut-prism and a semi-bipyramid. They are dual to each other.}
  \label{fig:edgecut}
\end{figure}
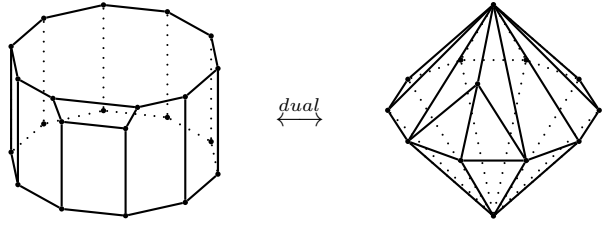

Let $P$ be a prism which is a product of a $k$-gon and an interval. Let $e$ be
an edge of one of the two $k$-gons of $P$. Then we can obtain another simple
polytope from $P$ by `cutting' the edge $e$. We will call such a polytope an
\emph{edge-cut-prism}. A \emph{semi-bipyramid} is the dual of an edge-cut-prism.
See Figure~\ref{fig:edgecut}.

\begin{thm}\label{thm:second max}
  Let $P$ be an irreducible polytope with $n$ vertices.  If $P\ne B_n$, then
$$b_{n-4}(P)\leq \binom{n-5}{2}+2.$$ The equality holds if and only if $P$
is a semi-bipyramid.
\end{thm}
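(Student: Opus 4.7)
The plan is to run the same induction on $n$ as in the proof of Theorem~\ref{thm:irr n-4}, with a sharper case analysis that exploits the hypothesis $P\ne B_n$. The base cases $n\le 9$ will be verified directly from Table~\ref{table:list}. For the inductive step, if $P$ has no $4$-belt then $b_{n-4}(P)=0<\binom{n-5}{2}+2$ and we are done; otherwise I pick a $4$-belt $B=\{v_1,v_2,v_3,v_4\}$ and carry out the same contraction, obtaining polytopes $P_1,P_2$ with $n_1+n_2=n+6$, $n_i\ge 7$, together with the identity
$$b_{n-4}(P)=b_{n_1-4}(P_1)+b_{n_2-4}(P_2)+(a_1-1)(a_2-1)+(b_1-1)(b_2-1)-3.$$

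The argument hinges on a geometric observation already present in the proof of Theorem~\ref{thm:irr n-4}: if $P_i=B_{n_i}$, then the contracted vertex $x_i$ must be an equator vertex of $B_{n_i}$, because otherwise the four belt vertices would have to be four equator vertices of $B_{n_i}$ forming a $4$-cycle, which is impossible for $n_i\ge 7$. Consequently $(a_i,b_i)$ is forced to be either $(n_i-5,0)$ or $(0,n_i-5)$. I split into three subcases. When both $P_i$ are bipyramids, the two extremal pairs are either ``aligned'' (forcing $P=B_n$, contradicting the hypothesis) or ``mismatched'' (making the product term negative, equal to $6-n$, so the total drops well below the target). When neither $P_i$ is a bipyramid, the inductive hypothesis applied to each $b_{n_i-4}(P_i)$ combined with the bound $(a_1-1)(a_2-1)+(b_1-1)(b_2-1)\le(n_1-6)(n_2-6)+1$ yields a value strictly below $\binom{n-5}{2}+2$.

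The crux of the argument is the mixed case: exactly one of $P_1,P_2$---say $P_1$---is a bipyramid. Here $(a_1,b_1)$ is extremal and $b_{n_2-4}(P_2)\le\binom{n_2-5}{2}+2$ is available by induction, but a careful product-term analysis is needed to secure the bound $\binom{n-5}{2}+2$. The main obstacle will be that naively combining the extremal product term with the inductive bound on $P_2$ overshoots, so one must extract additional savings from the hypothesis $P_2\ne B_{n_2}$ restricting the values of $(a_2,b_2)$ compatible with a non-bipyramid $P_2$. For the equality characterization I will trace equality back through the induction: achieving $b_{n-4}(P)=\binom{n-5}{2}+2$ will force $P_1$ to be a bipyramid in the extremal configuration and $P_2$ to be highly constrained, and by an explicit reconstruction this pins down $P$ as precisely the semi-bipyramid $S_n$ depicted in Figure~\ref{fig:edgecut}, with no other polytope realizing equality.
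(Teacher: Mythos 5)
Your framework matches the paper's---induction on $n$, contraction along a $4$-belt into $P_1,P_2$, the identity
\[
b_{n-4}(P)=b_{n_1-4}(P_1)+b_{n_2-4}(P_2)+(a_1-1)(a_2-1)+(b_1-1)(b_2-1)-3,
\]
and the observation that $P_i=B_{n_i}$ forces $(a_i,b_i)\in\{(n_i-5,0),(0,n_i-5)\}$---but the plan has a genuine gap at exactly the step you defer, and the savings you intend to look for are in the wrong place. What makes the paper's argument close is not a constraint coming from $P_2\ne B_{n_2}$ but a geometric lemma about the non-bipyramid side: for an \emph{irreducible} $P_1$ one has $a_1\ne n_1-6$. (Proof: if $a_1=n_1-6$, the single interior vertex $w\notin A_1$ sits inside one of the quadrilaterals $\{v_1,u_i,v_3,u_{i+1}\}$, and simpliciality plus the absence of $3$-belts forces $w$ adjacent to $v_1,v_3$, so $\{v_1,w,v_3,x_1\}$ is a $4$-belt and $w\in A_1$, a contradiction.) Thus $a_1\le n_1-7$, with $a_1=n_1-7$ iff $P_1$ is a semi-bipyramid. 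Without this, the mixed case overshoots: e.g.\ for $n=12$, $n_1=n_2=9$, $a_1=3$, $a_2=4$, $b_1=b_2=0$, your ingredients bound $b_{n-4}(P)$ by $\bigl(\tbinom{4}{2}+2\bigr)+\bigl(\tbinom{6}{2}-1\bigr)+2\cdot 3+1-3=26$, while the target is $\tbinom{7}{2}+2=23$. Only with $a_1\le n_1-7$ does the estimate tighten to $\binom{n-5}{2}+2$, and only with the refinement ``$a_1=n_1-7$ iff semi-bipyramid'' does the equality characterization follow.

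Two further inaccuracies. In the ``both bipyramids, mismatched'' subcase you say the total ``drops well below the target''; it does not. With $(a_1,b_1,a_2,b_2)=(n_1-5,0,0,n_2-5)$ one gets $b_{n-4}(P)=\tbinom{n_1-3}{2}+\tbinom{n_2-3}{2}-n+1$, which equals $\tbinom{n-5}{2}+2$ exactly when $n_1=7$ or $n_2=7$---this is precisely where the semi-bipyramid realizes equality, so your claimed strict inequality there is false and would cause your equality analysis to miss the extremizer. And in the ``neither bipyramid'' subcase the naive bound $(a_1-1)(a_2-1)+(b_1-1)(b_2-1)\le(n_1-6)(n_2-6)+1$ yields a total \emph{equal} to $\tbinom{n-5}{2}+2$, not strictly below; strictness requires noting that equality in that product bound forces both $P_i$ to be bipyramids (contradicting the case), or again invoking $a_i\le n_i-7$.
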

\begin{proof}
  Induction on $n$. If $n\leq 9$, then by Table~\ref{table:list}, it holds.

  Assume that $n\geq 10$ and it holds for all integers less than $n$.  We will
  use the same notations in the proof of Theorem~\ref{thm:irr n-4}.

  \begin{description}
  \item[Case 1] $a_i= n_i-5$ or $b_i= n_i-5$ for $i=1,2$. Then $P_1$ and $P_2$
    are bipyramids. Recall that if $(a_1,b_1,a_2,b_2)$ is equal to
    $(n_1-5,0,n_2-5,0)$ or $(0,n_1-5,0,n_2-5)$, then $P=B_n$. Thus
    $(a_1,b_1,a_2,b_2)$ must be equal to $(n_1-5,0,0,n_2-5)$ or
    $(0,n_1-5,n_2-5,0)$. Hence,
\begin{align*}
b_{n-4}(P) &= b_{n_1-4}(P_1)+b_{n_1-4}(P_1)-1-a_1-a_2-b_1-b_2+a_1a_2+b_1b_2\\
&= \binom{n_1-3}2 -1+\binom{n_2-3}2 -1  -1 -(n_1-5)-(n_2-5)\\
&=\binom{n_1-3}2 +\binom{n_2-3}2 -n+1.
\end{align*}
Recall that $n\geq 10$, $n_1,n_2\geq7$ and $n_1+n_2=n+6$. It is easy to check
that if $n_1=7$ or $n_2=7$, then $P$ is a semi-bipyramid and
$b_{n-4}(P)=\binom{n-5}{2}+2$.  Otherwise $\binom{n_1-3}2 +\binom{n_2-3}2
-n+1\leq \binom{n-5}{2}+\binom{5}{2} -n+1< \binom{n-5}{2}+2$ because $n\geq10$.

\item[Case 2] Otherwise. We can assume that $1\leq a_1< n_1-5$. We claim that
  $a_1\ne n_1-6$. For contradiction, suppose $a_1 = n_1-6$. Let
  $A_1=\{u_1,u_2,\ldots,u_{n_1-6}\}$ and assume that in the embedding of $P_1$,
  the $4$-gon $\{v_1,v_2,v_3,v_4\}$ is divided into the $4$-gons
  $\{v_1,u_i,v_3,u_{i+1}\}$ for $i=0,1,\ldots,n_1-6$, where $u_0=v_2$ and
  $u_{n_1-5}=v_4$. Let $w$ be the unique vertex inside of the $4$-gon
  $\{v_1,v_2,v_3,v_4\}$ which is not contained in $A_1$. Then $w$ is contained
  in the $4$-gon $\{v_1,u_i,v_3,u_{i+1}\}$ for some $i$. If $\{u_i,u_{i+1}\}$ is
  an edge, then $\{v_1,u_i,u_{i+1}\}$ or $\{v_3,u_i,u_{i+1}\}$ is a
  $3$-belt. Thus $\{u_i,w\}$ and $\{u_{i+1},w\}$ are edges. Since $P_1$ is
  simplicial, $\{v_1,w\}$ and $\{v_3,w\}$ are also edges. Then
  $\{v_1,w,v_3,x_1\}$ is a $4$-belt of $P_1$, which is a contradiction to
  $w\not\in A_1$. Thus we must have $1\leq a_1\leq n_1-7$. Using a similar
  argument, we can check that $a_1=n_1-7$ if and only if $P_1$ is a
  semi-bipyramid.

  Since $P_1$ is not a bipyramid and $n_1<n$, by the induction
  hypothesis, we get
\begin{align*}
b_{n-4}(P) &=b_{n_1-4}(P_1)+b_{n_1-4}(P_1)-1-a_1-a_2-b_1-b_2+a_1a_2+b_1b_2\\
&\leq b_{n_1-4}(P_1)+b_{n_2-4}(P_2)-1- a_1-a_2+a_1a_2\\
&= b_{n_1-4}(P_1)+b_{n_2-4}(P_2)+(a_1-1)(a_2-1)-2\\
 &\leq \binom{n_1-5}{2}+2+\binom{n_2-3}{2}-1+(n_1-8)(n_2-6)-2\\
&=\binom{n-5}{2}+2,
\end{align*}
 \end{description}
where the equality holds if and only if $a_1=n_1-7$ and $a_2=n_1-5$,
equivalently, $P$ is a semi-bipyramid.
\end{proof}

Now we will find the maximum of $b_{n-4}(P)$ when $P$ is a connected sum of
$\ell$ irreducible polytopes.

Let $P\in\C(P_1\# P_2)$ be a polytope with $n$ vertices, where $P_1$ and $P_2$
are polytopes with $n_1$ and $n_2$ vertices respectively. Additionally, we
assume that $P_2$ is irreducible.  Note that $n=n_1+n_2-3$. By
\eqref{thm:connected_sum}, we have
$$b_{n-4} (P) = \sum_{i=0}^{n-4} \left( b_i (P_1) \binom{n_2 -
    3}{n-4-i} + b_i (P_2) \binom{n_1 - 3}{n-4-i}\right) + \binom{n-3}{n-4}.$$
Since $\binom{n_2 - 3}{n-4-i}=0$ unless $i\geq n_1-4$ and $\binom{n_1 -
  3}{n-4-i}=0$ unless $i\geq n_2-4$, we get
\begin{equation}\label{eq:n-4}
  b_{n-4}(P)=b_{n_1-4}(P_1)+b_{n_2-4}(P_2)+(n_2-3)b_{n_1-3}(P_1) +(n-3).
\end{equation}

\begin{lem}\label{thm:mainlemma}
  Let $P\in\C(P_1\#\cdots \# P_\ell)$, where $P_i$ is an irreducible polytope
  with $n_i$ vertices. Let $n$ be the number of vertices of $P$. Then
$$b_{n-4}(P) = \sum_{i=1}^{\ell} b_{n_i  -4}(P_i) + (n-3)(\ell-1).$$
\end{lem}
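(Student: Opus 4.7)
The plan is to prove the lemma by induction on $\ell$, using the two-summand formula \eqref{eq:n-4} together with the Proposition at the start of Section~\ref{sec:max} which tells us that if $Q$ is a connected sum of $k$ irreducible polytopes and $Q$ has $m$ vertices, then $b_{m-3}(Q)=k-1$ (i.e.\ the number of $3$-belts equals one less than the number of irreducible summands).

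The base case $\ell=1$ is trivial since $P=P_1$, $n=n_1$, and both sides equal $b_{n_1-4}(P_1)$. For the inductive step, write $P\in\C(Q\# P_\ell)$, where $Q\in\C(P_1\#\cdots\# P_{\ell-1})$ has $m=\sum_{i=1}^{\ell-1}n_i-3(\ell-2)$ vertices, so that $n=m+n_\ell-3$. Applying \eqref{eq:n-4} to the decomposition $P\in\C(Q\# P_\ell)$ (which is legitimate because $P_\ell$ is irreducible) gives
\begin{equation*}
b_{n-4}(P)=b_{m-4}(Q)+b_{n_\ell-4}(P_\ell)+(n_\ell-3)\,b_{m-3}(Q)+(n-3).
\end{equation*}
By the inductive hypothesis $b_{m-4}(Q)=\sum_{i=1}^{\ell-1}b_{n_i-4}(P_i)+(m-3)(\ell-2)$, and by the $3$-belt-counting proposition $b_{m-3}(Q)=\ell-2$. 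Substituting both and collecting the $(\ell-2)$-terms yields
\begin{equation*}
b_{n-4}(P)=\sum_{i=1}^{\ell}b_{n_i-4}(P_i)+(\ell-2)\bigl[(m-3)+(n_\ell-3)\bigr]+(n-3).
\end{equation*}
Since $(m-3)+(n_\ell-3)=m+n_\ell-6=n-3$, the bracketed coefficient simplifies and we obtain $\sum_{i=1}^{\ell}b_{n_i-4}(P_i)+(\ell-1)(n-3)$, as required.

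There is no real obstacle here; the only thing to be careful about is that \eqref{eq:n-4} was derived from \eqref{thm:connected_sum} under the assumption that one of the two summands (namely $P_2$) is irreducible, which is exactly why the induction peels off $P_\ell$ (irreducible) rather than $Q$ (reducible for $\ell\geq 3$). Everything else is bookkeeping: the telescoping $(m-3)+(n_\ell-3)=n-3$ is what makes the constant term come out cleanly to $(\ell-1)(n-3)$.
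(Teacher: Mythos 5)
Your proof is correct and is essentially identical to the paper's: both proceed by induction on $\ell$, peel off the irreducible summand $P_\ell$, apply \eqref{eq:n-4} to $P\in\C(Q\#P_\ell)$, and close using the induction hypothesis together with $b_{m-3}(Q)=\ell-2$. Your remark about why the irreducibility of $P_\ell$ (rather than $Q$) is what makes \eqref{eq:n-4} applicable is a worthwhile observation that the paper leaves implicit.
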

\begin{proof}
  Induction on $\ell$. If $\ell=1$ then it is clear.  Let $\ell\geq2$ and assume
  that it holds for all integers less than $\ell$.

Let $P\in\C(P_1\#\cdots\#P_\ell)$. Then $P\in\C(Q\# P_\ell)$ for
some $Q\in\C(P_1\#\cdots\#P_{\ell-1})$. Let $n'$ be the number of
vertices of $Q$. Then by \eqref{eq:n-4},
$$b_{n-4}(P)=b_{n'-4}(Q)+b_{n_\ell-4}(P_\ell)+(n_\ell-3)b_{n'-3}(Q)+(n-3).$$
By the induction hypothesis,
$$b_{n'-4}(Q)=\sum_{i=1}^{\ell-1}b_{n_i-4}(P_i)+(n'-3)(\ell-2).$$ Since
$b_{n'-3}(Q)=\ell-2$ and $(n'-3)+(n_\ell-3)=n-3$, we get
$$b_{n-4}(P)=\sum_{i=1}^{\ell}b_{n_i-4}(P_i)+(n-3)(\ell-1).$$
\end{proof}

For an integer $n$, let $f(n)=\binom{n-3}{2}-1+\delta_{n,6}$. Thus $f(n)$ is the
maximum of $b_{n-4}(P)$ as shown in Theorem~\ref{thm:irr n-4}.

\begin{lem}\label{thm:flemma}
  Let $m\geq n>4$.  Then we have $f(m)+f(n)<f(m+2)+f(n-2)$, if $(m,n)=(6,6)$,
  and $f(m)+f(n)<f(m+1)+f(n-1)$, otherwise.
\end{lem}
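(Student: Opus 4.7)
The plan is to decompose $f$ cleanly into a ``smooth part'' and a ``bump'': writing $f(n) = g(n) + \delta_{n,6}$ with $g(n) := \binom{n-3}{2} - 1$, the function $g$ is a strictly convex quadratic in $n$, while the $\delta$-term contributes only at $n=6$. First I would compute the forward differences $g(n) - g(n-1) = \binom{n-3}{2} - \binom{n-4}{2} = n - 4$ and obtain
\[
g(m+1) + g(n-1) - g(m) - g(n) = (m-3) - (n-4) = m - n + 1,
\]
which is at least $1$ whenever $m \geq n$. This delivers the $g$-piece of the inequality with room to spare.

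Next I would isolate the $\delta$-correction
\[
D := \delta_{m+1,6} + \delta_{n-1,6} - \delta_{m,6} - \delta_{n,6},
\]
and note that $D$ is nonzero only when at least one of $m, n, m+1, n-1$ equals $6$, i.e., when $m$ or $n$ lies in $\{5,6,7\}$. Under $m \geq n > 4$ there are only a handful of such configurations to check; in each, either $D \geq 0$, or $D = -1$ while $m - n + 1 \geq 2$ (for instance $(m,n)=(6,5)$, or $n = 6$ with $m \geq 7$). The unique situation forcing $D = -2$ is $(m,n) = (6,6)$, which is precisely the excluded case. So for every other pair the sum $(m-n+1) + D$ is strictly positive, yielding $f(m) + f(n) < f(m+1) + f(n-1)$.

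Finally, for the exceptional pair $(m,n) = (6,6)$ I would just compute directly: $f(6) = g(6) + 1 = 3$, $f(8) = g(8) = 9$, $f(4) = g(4) = -1$, so $f(8) + f(4) - 2 f(6) = 8 - 6 = 2 > 0$, as required.

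I do not expect a real obstacle here; the argument is a one-variable convexity computation plus a short finite case-check for the $\delta_{n,6}$ bump. The only mild subtlety is that $g(4) = -1$ rather than $0$ (so $f$ is not monotone near $n=4$), but this sign does not disturb any of the comparisons since the $g$-difference $m-n+1$ already dominates.
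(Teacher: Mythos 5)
Your proposal is correct and takes essentially the same approach as the paper: compute the forward difference of the quadratic part (giving $m-n+1\ge 1$) and then track the $\delta_{\cdot,6}$ correction, with $(6,6)$ as the unique offending pair handled separately. The paper's version is terser — it simply drops the nonnegative terms $\delta_{m+1,6}+\delta_{n-1,6}$ and observes $m-n\ge\delta_{m,6}+\delta_{n,6}$ once $(6,6)$ is excluded — but your explicit decomposition $f=g+\delta_{\cdot,6}$ and finite case-check is the same argument in slightly more spelled-out form.
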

\begin{proof}
If $(m,n)=(6,6)$, then it is clear. Otherwise,
\begin{align*}
f(m+1)+f(n-1)-f(m)-f(n) &= (m-3)-(n-4)+\delta_{m+1,6}+\delta_{n-1,6}
-\delta_{m,6}-\delta_{n,6}\\
&\geq (m-n)+1 -\delta_{m,6}-\delta_{n,6}\geq1.
\end{align*}
\end{proof}

\begin{lem}\label{thm:fflemma}
  Let $n,\ell$ be fixed integers. Let $n_1,\ldots,n_\ell$ be integers satisfying
  $n_i\geq 4$ for all $i$ and $\sum_{i=1}^\ell n_i -3(\ell-1)=n$.  Then
$$f(n_1)+\cdots+f(n_\ell)\leq \binom{n-2}{2}
-n\ell + \frac{\ell(\ell+3)}{2} +\delta_{\ell,n-5},$$ where the equality holds
if and only if for some $j$, $n_j=n-\ell+1$ and $n_i=4$ for $i\ne j$.
\end{lem}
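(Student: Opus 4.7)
My plan is to prove this by a direct exchange (or ``smoothing'') argument, using Lemma~\ref{thm:flemma} as the engine. The intuition is that the content of Lemma~\ref{thm:flemma} says $f$ is strictly convex in the sense that separating two entries (either by $+1/-1$, or by $+2/-2$ in the single exceptional case $(6,6)$) strictly increases the sum $\sum f(n_i)$. So the maximum of the sum must be attained at the most ``extremal'' tuple compatible with the constraints, namely $T^\ast = (n-\ell+1, 4, 4, \ldots, 4)$ (taking $n-\ell+1 = 4$ when $\ell = n-3$). Note $T^\ast$ is a valid tuple because $n_i \geq 4$ for all $i$ forces $\ell \leq n-3$, and the $\ell$ entries sum to $4(\ell-1) + (n-\ell+1) = n + 3(\ell-1)$ as required.

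The heart of the proof is the following exchange step. Suppose a tuple $T = (n_1, \ldots, n_\ell)$ has at least two entries strictly greater than $4$; pick indices with $n_i \geq n_j > 4$. If $(n_i, n_j) \neq (6,6)$, replace $(n_i, n_j)$ by $(n_i+1, n_j-1)$; if $(n_i, n_j) = (6,6)$, replace by $(8,4)$. In both cases the sum is preserved, the lower bound $\geq 4$ is preserved (here we use $n_j \geq 5$ in the first case and $n_j = 6$ in the second), and Lemma~\ref{thm:flemma} gives $\sum f(n_k) < \sum f(n_k')$. Because the set of admissible tuples with given sum and entries $\geq 4$ is finite, iterating this strict increase must terminate. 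The terminal configuration has at most one entry $> 4$, which under the sum constraint forces it to equal $T^\ast$ (up to permutation).

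It remains to evaluate $\sum f(n_i^\ast)$. Since $f(4) = \binom{1}{2}-1 = -1$ and $f(n-\ell+1) = \binom{n-\ell-2}{2}-1+\delta_{n-\ell+1,6}$, we obtain
\[
\sum_{i=1}^{\ell} f(n_i^\ast) \;=\; f(n-\ell+1) - (\ell-1) \;=\; \binom{n-\ell-2}{2} - \ell + \delta_{\ell, n-5}.
\]
A direct algebraic expansion gives $\binom{n-2}{2} - \binom{n-\ell-2}{2} = n\ell - \frac{\ell(\ell+5)}{2}$, hence
\[
\binom{n-\ell-2}{2} - \ell \;=\; \binom{n-2}{2} - n\ell + \frac{\ell(\ell+3)}{2},
\]
which is precisely the claimed right-hand side. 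Combining with Step 1 yields the inequality, with equality iff the exchange process cannot be applied, i.e. iff the tuple (up to permutation) is $T^\ast$.

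I do not expect any serious obstacle here: the finiteness-plus-monotonicity argument is clean once one allows the $(6,6)$ case to use the length-$2$ exchange from Lemma~\ref{thm:flemma}, and the rest is just algebraic bookkeeping. The only subtlety worth stating explicitly is that the exchange preserves the constraint $n_i \geq 4$ precisely because we always select $n_j > 4$ (with $n_j = 6$ when a $+2/-2$ step is used), which is what makes the induction go through.
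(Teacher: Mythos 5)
Your proposal is correct and follows essentially the same exchange (smoothing) argument as the paper: use Lemma~\ref{thm:flemma} to strictly increase $\sum f(n_i)$ whenever two entries exceed $4$ (with the $(6,6)\to(8,4)$ step covering the exceptional case), so the maximum is attained only at the tuple $(n-\ell+1,4,\dots,4)$. The only differences are cosmetic---the paper argues via existence of a maximizer by finiteness rather than an iterated process, and leaves the algebraic evaluation at the extremal tuple as ``straightforward to check,'' which you carry out explicitly and correctly.
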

\begin{proof}
The equality condition is straightforward to check.

Since the number of sequences $n_1,\ldots,n_\ell$ satisfying the conditions are
finite, there exists a sequence such that $f(n_1)+\cdots+f(n_\ell)$ is maximal.
Thus it is sufficient to show that if there are two integers $i,j$ with
$n_i\geq n_j>4$, then there is a sequence $n_1',n_2',\ldots,n_\ell'$
satisfying the conditions and $f(n_1)+\cdots+f(n_\ell)<
f(n_1')+\cdots+f(n_\ell')$. By Lemma~\ref{thm:flemma}, we can obtain such a
sequence by replacing $n_i$ and $n_j$ with $n_i+1$ and $n_j-1$ (or $n_i+2$ and
$n_j-2$ if $n_i=n_j=6$).
\end{proof}

The following is the main theorem of this section.

\begin{thm}\label{thm:n-4 ell}
  Let $P$ be a connected sum of $\ell$ irreducible polytopes.  Let $n$ be the
  number of vertices of $P$. If $\ell \leq n-4$, then
$$b_{n-4}(P) \leq \binom{n-3}{2} + \frac{\ell(\ell-3)}2
+\delta_{\ell,n-5},$$ where the equality holds if and only if $P$ is a
connected sum of a bipyramid and $\ell-1$ tetrahedrons.
\end{thm}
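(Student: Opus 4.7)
The plan is to assemble the three preceding results: write $P \in \C(P_1 \# \cdots \# P_\ell)$ where each $P_i$ is irreducible on $n_i$ vertices with $n_i \geq 4$ and $\sum_{i=1}^\ell n_i - 3(\ell-1) = n$. Lemma~\ref{thm:mainlemma} converts the problem into bounding a sum of the quantities $b_{n_i-4}(P_i)$; Theorem~\ref{thm:irr n-4} bounds each summand individually by $f(n_i) = \binom{n_i-3}{2}-1+\delta_{n_i,6}$; and Lemma~\ref{thm:fflemma} bounds $\sum_i f(n_i)$ under the given constraint on the $n_i$. So the proof is essentially a chain of substitutions.

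Concretely, I would first apply Lemma~\ref{thm:mainlemma} to obtain
$$b_{n-4}(P) = \sum_{i=1}^{\ell} b_{n_i-4}(P_i) + (n-3)(\ell-1).$$
Then, invoking Theorem~\ref{thm:irr n-4} termwise and Lemma~\ref{thm:fflemma},
$$\sum_{i=1}^{\ell} b_{n_i-4}(P_i) \;\leq\; \sum_{i=1}^\ell f(n_i) \;\leq\; \binom{n-2}{2} - n\ell + \frac{\ell(\ell+3)}{2} + \delta_{\ell, n-5}.$$
Adding $(n-3)(\ell-1) = n\ell - n - 3\ell + 3$ to the right-hand side collapses the $n\ell$ terms, and using the identity $\binom{n-2}{2} - (n-3) = \binom{n-3}{2}$ together with $\tfrac12(\ell^2+3\ell) - 3\ell = \tfrac12 \ell(\ell-3)$ yields the bound
$$b_{n-4}(P) \leq \binom{n-3}{2} + \frac{\ell(\ell-3)}{2} + \delta_{\ell, n-5}.$$
This is just bookkeeping; the only arithmetic care needed is to make sure the $-n\ell$ from Lemma~\ref{thm:fflemma} cancels against the linear-in-$\ell$ piece of $(n-3)(\ell-1)$.

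For the equality characterization, I would trace back the conditions. Equality in Lemma~\ref{thm:fflemma} forces $n_i = 4$ for all but one index $j$, and $n_j = n - \ell + 1$. Since the unique irreducible polytope on $4$ vertices is $T_4$, this means $P_i = T_4$ for $i \neq j$. Equality in Theorem~\ref{thm:irr n-4} applied to $P_j$ (which has $n_j \geq 5$ since $\ell \leq n-4$) then forces $P_j = B_{n_j}$. Thus $P$ must be a connected sum of $B_{n-\ell+1}$ with $\ell - 1$ copies of $T_4$, and conversely such a $P$ achieves the bound. Note this is consistent with the Kronecker delta: when $\ell = n-5$ we have $n_j = 6$ and $B_6 = O_6$, and $\delta_{n_j,6} = \delta_{\ell, n-5}$ contributes the extra $+1$ coming from Theorem~\ref{thm:irr n-4} in the $n_j = 6$ case.

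There is no real obstacle beyond verifying that the algebraic simplification is correct and that the equality conditions from the two lemmas are compatible; the substantive combinatorial work has already been done in Theorem~\ref{thm:irr n-4}, Lemma~\ref{thm:mainlemma}, and Lemma~\ref{thm:fflemma}.
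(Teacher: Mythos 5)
Your proposal follows exactly the same route as the paper: apply Lemma~\ref{thm:mainlemma} to reduce to bounding $\sum_i b_{n_i-4}(P_i)$, bound each summand by $f(n_i)$ via Theorem~\ref{thm:irr n-4}, then bound $\sum_i f(n_i)$ via Lemma~\ref{thm:fflemma}, and simplify. Both the algebra and the equality-tracing match the paper's proof; the paper is merely terser, stating the equality condition without spelling out the back-substitution you perform.
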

\begin{proof}
  Let $P=P_1 \# \cdots \# P_\ell$.  Let $P_i$ have $n_i$ vertices. We can assume
  $n_1\geq\cdots\geq n_\ell\geq 4$.  By Theorem~\ref{thm:irr n-4},
  Lemma~\ref{thm:mainlemma} and Lemma~\ref{thm:fflemma}, we have
\begin{align*}
  b_{n-4}(P) &= \sum_{i=1}^{\ell} b_{n_i -4}(P_i) + (n-3)(\ell-1)\\ &\leq
  \sum_{i=1}^{\ell} f(n_i)+(n-3)(\ell-1)\\
  &\leq \binom{n-2}{2} -n\ell
  + \frac{\ell(\ell+3)}{2} +\delta_{\ell,n-5} + (n-3)(\ell-1)\\
  &=\binom{n-3}{2} + \frac{\ell(\ell-3)}2 +\delta_{\ell,n-5}.
\end{align*}
The equality holds if and only if $P_1$ is a bipyramid and $P_i$ is the
tetrahedron for $i>1$.
\end{proof}

\section{Combinatorial rigidity of simplicial polytopes}
\label{sec:rigid}

Recall that two polytopes $P$ and $Q$ are combinatorially rigid if
$b_k(P)=b_k(Q)$ for all $k$ implies $P=Q$.
If $b_k(P)=b_k(Q)$ for all $k$, then the numbers of vertices of $P$ and $Q$ are the
same. In fact, the number of vertices of $P$ is determined by $b_2(P)$ as follows.

\begin{prop}\label{thm:b2}
Let $P$ be a polytope with $n$ vertices. Then
$$n=\frac{7+\sqrt{8 \cdot b_2(P)+1}}2.$$
\end{prop}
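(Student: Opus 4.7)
The plan is to use the combinatorial formula \eqref{eq:bk} for $b_2(P)$ together with the fact that the edge count of a $3$-dimensional simplicial polytope is determined by $n$ via Euler's formula, then invert the resulting quadratic.

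First, I would apply \eqref{eq:bk} with $i=2$. For a $2$-element subset $W=\{u,v\}\subset V(P)$, the induced subgraph $P|_W$ has $\cc(P|_W)=1$ if $\{u,v\}$ is an edge of $P$ and $\cc(P|_W)=2$ otherwise. Hence each pair contributes either $0$ or $1$, and
\[
b_2(P)=\binom{n}{2}-e(P),
\]
where $e(P)$ denotes the number of edges of $P$.

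Next, since $P$ is a $3$-dimensional simplicial polytope with $n$ vertices, $e(P)$ edges, and $f$ triangular $2$-faces, Euler's relation $n-e(P)+f=2$ combined with the incidence identity $3f=2e(P)$ yields $e(P)=3n-6$. Substituting this gives
\[
b_2(P)=\binom{n}{2}-(3n-6)=\frac{n^2-7n+12}{2}.
\]

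Finally, treating this as a quadratic in $n$, the equation $n^2-7n+12-2b_2(P)=0$ has discriminant $49-4(12-2b_2(P))=8b_2(P)+1$. Taking the positive root (since $n\geq 4$) yields the claimed formula
\[
n=\frac{7+\sqrt{8b_2(P)+1}}{2}.
\]
There is no real obstacle here; the only points to be careful about are the correct combinatorial interpretation of connected components of $P|_W$ for $|W|=2$ and the selection of the positive square root.
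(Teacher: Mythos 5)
Your proposal is correct and follows essentially the same argument as the paper: compute $b_2(P)=\binom{n}{2}-e(P)$ via the combinatorial interpretation of \eqref{eq:bk}, use Euler's formula with $2e=3f$ to get $e=3n-6$, and solve the resulting quadratic for $n$. The only difference is that you spell out the quadratic formula explicitly, which the paper leaves implicit.
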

\begin{proof}
  Let $e$ (resp.~$f$) be the number of edges (resp.~faces) of $P$. Then by the
  Euler's theorem, we have $n-e+f=2$. Since $P$ is simplicial, we have
  $2e=3f$. Thus $e=3n-6$. Observe that $b_2(P)$ is the number of ways of
  choosing two vertices of $P$ which are not connected by an edge.  Thus
  $b_2(P)=\binom{n}{2}-e=\binom{n}{2}-3n+6$. Solving this equation, we get the
  theorem.
\end{proof}

\begin{prop}\label{thm:rigid prism}
A bipyramid and a semi-bipyramid are rigid.
\end{prop}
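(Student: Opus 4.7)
The plan is to deduce rigidity of both polytope families directly from the uniqueness of the maximizers in Theorems~\ref{thm:irr n-4} and~\ref{thm:second max}. Let $P$ be either an irreducible bipyramid $B_n$ (so $n\geq 6$) or a semi-bipyramid with $n$ vertices, and suppose $Q$ is a simplicial polytope with $b_k(Q)=b_k(P)$ for every $k\geq 0$. I would start by invoking Proposition~\ref{thm:b2} to conclude that $Q$ has exactly $n$ vertices. Next, since $P$ is irreducible, we have $b_{n-3}(P)=0$, hence $b_{n-3}(Q)=0$ too, which forces $Q$ to be irreducible as well (otherwise any $3$-belt in $Q$ would contribute a positive term to $b_{n-3}(Q)$).

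The two cases then separate cleanly using $b_{n-4}$. If $P=B_n$, then $b_{n-4}(P)$ attains the extremal value $\binom{n-3}{2}-1+\delta_{n,6}$ of Theorem~\ref{thm:irr n-4}; since $Q$ is irreducible with $n\geq 6>4$ vertices, the equality case of that theorem immediately forces $Q=B_n=P$. If $P$ is a semi-bipyramid, I would first rule out $Q=B_n$ by a short arithmetic check showing that $b_{n-4}(B_n)-b_{n-4}(P)=\binom{n-3}{2}-\binom{n-5}{2}-3+\delta_{n,6}=2n-12+\delta_{n,6}$, which is strictly positive for every $n$ at which a semi-bipyramid exists (the smallest has $n=8$). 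With $Q\neq B_n$ established, Theorem~\ref{thm:second max} identifies $Q$ as a semi-bipyramid, and uniqueness of the semi-bipyramid with $n$ vertices up to isomorphism (since the symmetries of a $k$-gon prism act transitively on the edges of either $k$-gonal face) yields $Q=P$.

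The main obstacle, such as it is, lies only in the arithmetic comparison above together with the verification that a bipyramid and a semi-bipyramid of each fixed size are unique up to combinatorial isomorphism; both are essentially routine. Once these are granted, the proposition follows as a direct corollary of the extremal counts developed in Section~\ref{sec:max}.
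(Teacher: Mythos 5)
Your proof is correct and takes essentially the same route as the paper: deduce the vertex count from Proposition~\ref{thm:b2}, detect irreducibility from $b_{n-3}=0$, and then identify $Q$ via the equality cases of Theorem~\ref{thm:irr n-4} (for the bipyramid) and Theorem~\ref{thm:second max} (for the semi-bipyramid). You are a bit more careful than the paper's one-line treatment of the semi-bipyramid case, since you explicitly rule out $Q=B_n$ before invoking Theorem~\ref{thm:second max} and note the uniqueness of the semi-bipyramid of a given size; the paper leaves both of these implicit.
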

\begin{proof}
Let $P$ be a polytope with $b_k(P)=b_k(B_n)$ for all $n$. By
Proposition~\ref{thm:b2}, $P$ has $n$ vertices. Since
$b_{n-3}(P)=b_{n-3}(B_n)=0$, $P$ is irreducible. By Theorem~\ref{thm:irr n-4}
with $b_{n-4}(P)=b_{n-4}(B_n)$, we get $P=B_n$. Thus $B_n$ is rigid. We can
prove the rigidity of a semi-bipyramid in the same way using
  Theorem~\ref{thm:second max}.
\end{proof}

Theorem~\ref{thm:rigid} follows from the following propositions. Note that
$T_4\#T_4$ is rigid because it is the only polytope with $5$ vertices. Note also
that $O_6=B_6$. 

\begin{prop}\label{thm:tet}
  For $n\geq6$, the connected sum $T_4\# B_n$ is rigid.
\end{prop}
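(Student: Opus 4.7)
The plan is to use the combinatorial invariants $b_k$ to progressively pin down the structure of any polytope with the same special graded Betti numbers as $T_4 \# B_n$. Set $N = n+1$, the number of vertices of $T_4 \# B_n$, and suppose $P$ is a simplicial polytope with $b_k(P) = b_k(T_4 \# B_n)$ for every $k \geq 0$. Applying Proposition~\ref{thm:b2} to $b_2$ forces $P$ to have exactly $N$ vertices. Next, since $b_{N-3}$ counts the $3$-belts of a polytope and $b_{N-3}(T_4 \# B_n) = 1$, the polytope $P$ has a single $3$-belt, which forces $P = P_1 \# P_2$ for some irreducible polytopes $P_1, P_2$.

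The core step is to identify $P_1$ and $P_2$ using $b_{N-4}$. Lemma~\ref{thm:mainlemma} gives
\[
b_{N-4}(T_4 \# B_n) \;=\; b_0(T_4) + b_{n-4}(B_n) + (N-3),
\]
and using $b_0(T_4) = -1$ (the equality case of Theorem~\ref{thm:irr n-4} at $n=4$), the value of $b_{n-4}(B_n)$ from the same theorem, and the identity $\binom{n-2}{2} = \binom{n-3}{2} + (n-3)$, one checks that this value equals the bound $\binom{N-3}{2} - 1 + \delta_{N,7}$ of Theorem~\ref{thm:n-4 ell} for $\ell = 2$ (the $\delta_{n,6}$ coming from $B_n$ matches the $\delta_{N,7}$ in the theorem). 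Since $b_{N-4}(P)$ takes the same value, the equality clause of Theorem~\ref{thm:n-4 ell} forces $P$ to be a connected sum of a bipyramid and a tetrahedron; the vertex-count constraint $4 + m - 3 = N$ then pins the bipyramid summand down to $B_n$, so $P \in \C(T_4 \# B_n)$.

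It remains to prove $|\C(T_4 \# B_n)| = 1$, which together with the previous step will give $P = T_4 \# B_n$. This is immediate from the regularity of $T_4$: after fixing faces $F \subset T_4$ and $F' \subset B_n$ (using face-transitivity of each), the six possible ways of identifying the vertices of $F$ with those of $F'$ differ by permutations of the three vertices of $F$, and every such permutation is induced by an element of $\mathrm{Aut}(T_4) = S_4$ that stabilizes $F$ setwise. Hence all six identifications produce the same polytope.

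The main subtlety is the arithmetic check in the middle paragraph: verifying that $T_4 \# B_n$ really attains the bound in Theorem~\ref{thm:n-4 ell}, particularly at the boundary case $n = 6$ where the Kronecker corrections on both sides must line up. Once that verification is in place, the equality clause of Theorem~\ref{thm:n-4 ell} does the structural work, and the remaining reductions (vertex counting, the symmetry argument for uniqueness of the connected sum) are essentially routine.
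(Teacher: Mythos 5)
Your proof is correct and takes essentially the same approach as the paper: the paper's own proof is the one-liner ``This is an immediate consequence of Theorem~\ref{thm:n-4 ell}.'' Your argument simply fills in the details the authors leave implicit, namely the reduction via $b_2$ and $b_{N-3}$, the arithmetic check that $T_4\#B_n$ attains the $\ell=2$ bound of Theorem~\ref{thm:n-4 ell} (with the $\delta$-terms lining up at $n=6$), and the verification that $|\C(T_4\#B_n)|=1$ via the regularity of $T_4$.
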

\begin{proof}
This is an immediate consequence of Theorem~\ref{thm:n-4 ell}.
\end{proof}

\begin{prop}\label{thm:cube}
  For $n\geq6$, the connected sum $O_6\# B_n$ is rigid.
\end{prop}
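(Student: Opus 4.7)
The plan follows the outline of Proposition~\ref{thm:rigid prism}, now using both Theorem~\ref{thm:irr n-4} and Theorem~\ref{thm:second max}. First, observe that $O_6$ is regular, so the argument in the proof of Lemma~\ref{lem:face-transitive} shows $|\C(O_6 \# B_n)| = 1$, and the notation $O_6 \# B_n$ is unambiguous. Let $P$ be a simplicial polytope with $b_k(P) = b_k(O_6 \# B_n)$ for all $k \geq 0$. Proposition~\ref{thm:b2} gives $P$ has $N := n+3$ vertices, and $b_{N-3}(O_6 \# B_n) = 1$ forces $P = Q_1 \# Q_2$ for irreducible $Q_1, Q_2$ with $m_1 + m_2 = n+6$. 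Using $b_2(O_6) = 3$ and Lemma~\ref{thm:mainlemma} on both sides of $b_{N-4}(P) = b_{N-4}(O_6 \# B_n)$, the problem reduces to showing $\{Q_1, Q_2\} = \{O_6, B_n\}$ is the unique irreducible solution of
\[
b_{m_1-4}(Q_1) + b_{m_2-4}(Q_2) = \binom{n-3}{2} + 2 + \delta_{n,6}.
\]

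I split the analysis into two cases. In \emph{Case I}, both $Q_i \in \{T_4\} \cup \{B_m : m \geq 6\}$. The equality case of Theorem~\ref{thm:irr n-4} gives exact values, so setting $a = m_1 - 3$, $b = m_2 - 3$ with $a + b = n$ and $a, b \in \{1\} \cup \{k \geq 3\}$, the equation reduces to $ab = 3n - 10 - \delta_{n,6} + \delta_{a,3} + \delta_{b,3}$. For $a = 1$ this gives $2n = 9 - \delta_{n,6}$, which has no integer solution for $n \geq 6$. For $a = 3$ both sides equal $3n - 9$, giving $\{a, b\} = \{3, n-3\}$ and hence $\{Q_1, Q_2\} = \{O_6, B_n\}$. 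For $4 \leq a \leq b$ (so $n \geq 8$ and $\delta_{n,6} = \delta_{a,3} = \delta_{b,3} = 0$), the equation becomes $n = (a^2 - 10)/(a - 3) = a + 3 - 1/(a-3)$, which is an integer only at $a = 4$, forcing $n = 6$, a contradiction. Thus Case I yields only $\{Q_1, Q_2\} = \{O_6, B_n\}$.

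In \emph{Case II}, some $Q_i$, say $Q_1$, is neither a bipyramid nor $T_4$. Inspection of Table~\ref{table:list} shows that every irreducible polytope with at most $7$ vertices is a bipyramid or $T_4$, so $m_1 \geq 8$. Theorem~\ref{thm:second max} yields $b_{m_1-4}(Q_1) \leq \binom{m_1-5}{2} + 2$, while Theorem~\ref{thm:irr n-4} gives $b_{m_2-4}(Q_2) \leq \binom{m_2-3}{2} - 1 + \delta_{m_2,6}$, so the sum is at most $\binom{m_1-5}{2} + \binom{m_2-3}{2} + 1 + \delta_{m_2,6}$. The function $m_1 \mapsto \binom{m_1-5}{2} + \binom{n+3-m_1}{2}$ is strictly convex in $m_1$ (its second difference equals $2$), so its maximum over the feasible endpoints $m_2 \in \{4, 6, 7, \ldots, n-2\}$ is attained at $m_2 = 4$ (i.e., $m_1 = n+2$) with value $\binom{n-3}{2}$. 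Hence the total bound is $\binom{n-3}{2} + 1 < \binom{n-3}{2} + 2 + \delta_{n,6}$, a contradiction.

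The main obstacle is Case II: one must confirm that the convexity maximum is indeed attained at the endpoint $(m_1, m_2) = (n+2, 4)$ rather than at $m_2 = 6$, where the $\delta_{m_2,6} = 1$ correction could in principle offset the loss in $\binom{m_1-5}{2} + \binom{m_2-3}{2}$, and that the sub-case where both $Q_i$ are exotic (possible only for $n \geq 10$, with the tighter bound $\binom{m_1-5}{2} + \binom{m_2-5}{2} + 4$) gives an even smaller bound. Direct endpoint comparison verifies both claims, completing the proof that $P = O_6 \# B_n$.
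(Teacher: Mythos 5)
Your proof is correct and uses the same key ingredients (Theorem~\ref{thm:irr n-4}, Theorem~\ref{thm:second max}, Lemma~\ref{thm:mainlemma}), but the case decomposition is genuinely different from the paper's. The paper first invokes Lemma~\ref{thm:flemma} to force $n_2\leq 6$ (where $n_1\geq n_2$), leaving only $n_2\in\{4,5,6\}$; the exotic-polytope bound of Theorem~\ref{thm:second max} is then needed only once, in the subcase $n_2=4$ with $P_1$ not a bipyramid. You instead bypass Lemma~\ref{thm:flemma} entirely and split on whether both summands are bipyramids or $T_4$ (Case~I, where the equality part of Theorem~\ref{thm:irr n-4} gives exact values and reduces everything to the Diophantine condition $ab=3n-10-\delta_{n,6}+\delta_{a,3}+\delta_{b,3}$, i.e., $(a-3)(b-3)=-1$ in the generic subcase) or not (Case~II, settled by a convexity estimate combining Theorems~\ref{thm:second max} and~\ref{thm:irr n-4}). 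The paper's route is shorter because Lemma~\ref{thm:flemma} absorbs most of the work; yours is more self-contained and arguably more systematic, at the cost of the endpoint comparisons you flag at the end — which do check out, with the $\delta_{m_2,6}$ correction harmless since for $n=6,7$ the constraint $m_1\geq 8$ already rules out $m_2=6$, and for $n\geq8$ one has $\binom{n-3}{2}-\binom{n-5}{2}-3=2n-12\geq 4>1$. Two minor slips that do not affect the conclusion: in the $a=1$ subcase the equation is $2n=9+\delta_{n,6}$, not $9-\delta_{n,6}$; and your remark that $|\C(O_6\#B_n)|=1$ (so the statement is well-posed) is a useful explicit observation that the paper leaves implicit via Proposition~\ref{thm:unique}.
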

\begin{proof}
  Let $n'$ be the number of vertices of $O_6\# B_n$, i.e., $n'=n+3$.  Let $P$
  be a polytope with $b_k(P)=b_k(O_6\# B_n)$ for all $k$.  Since
  $b_{n'-3}(P)=b_{n'-3}(O_6\# B_n)=1$,
 we have $P\in\C(P_1\#P_2)$ for some
  irreducible polytopes $P_1$ and $P_2$. Let $n_1$ and $n_2$ be the number of
  vertices of $P_1$ and $P_2$ respectively.  Then $n_1+n_2=n+6$. Assume $n_1\geq
  n_2$. Since $b_{n'-4}(P)=b_{n'-4}(O_6\# B_n)$, by Lemma~\ref{thm:mainlemma},
  we have $b_{n_1-4}(P_1)+b_{n_2-4}(P_2)=b_{6-4}(O_6)+b_{n-4}(B_n)$. Since
  $b_{6-4}(O_6)+b_{n-4}(B_n) = f(6)+f(n)$ and $b_{n_1-4}(P_1)+b_{n_2-4}(P_2)
  \leq f(n_1)+f(n_2)$, by Lemma~\ref{thm:flemma}, we have $n_2\leq 6$.

  If $n_2=6$, then the equality holds and we get that $P_1=B_n$ and $P_2=O_6$.
  Since there is no irreducible simplicial polytope with $5$ vertices, we have
  $n_2\ne 5$.  Now assume $n_2=4$. Then $P_2=T_4$ and $n_1=n+2$.  Since
  $b_{n_1-4}(P_1)+b_{n_2-4}(P_2)=b_{6-4}(O_6)+b_{n-4}(B_n)=3+
  \binom{n-3}{2}-1+\delta_{n,6}$ and $b_{n_2-4}(P_2)=-1$, we get
\begin{equation}\label{eq:cube pf2}
b_{n_1-4}(P_1)=\binom{n-3}{2}+\delta_{n,6}+3.
\end{equation}

\begin{description}
\item[Case 1] $P_1$ is a bipyramid. Then
$$\binom{n-1}{2}-1+\delta_{n+2,6}=\binom{n-3}{2}+\delta_{n,6}+3.$$ Thus
we have $2n-5=4+\delta_{n,6}-\delta_{n,4}$, which does not have
an integer solution for $n\geq6$.
\item[Case 2] $P_1$ is not a bipyramid. Then by Theorem~\ref{thm:second
    max}, we get
$b_{n_1-4}(P_1)\leq \binom{n-3}2 + 2$, which is a contradiction to
\eqref{eq:cube pf2}.
\end{description}
Thus we can only have $P_1=B_n$ and $P_2=O_6$, thus $P=O_6\#B_n$ and $O_6\#B_n$
is rigid.
\end{proof}

\begin{prop}
The connected sum $T_4\# I_{12} $ is rigid.
\end{prop}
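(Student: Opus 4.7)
The plan is to follow the template of Proposition~\ref{thm:cube}, using Lemma~\ref{thm:mainlemma} to reduce to a single identity on the graded Betti numbers of the irreducible factors of $P$, and then invoking the rigidity of the icosahedron as a $5$-regular triangulation of $S^2$.

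First I would take $P$ satisfying $b_k(P) = b_k(T_4 \# I_{12})$ for all $k$ and use Proposition~\ref{thm:b2} to conclude $P$ has $13$ vertices. The identity $b_{10}(P) = 1$, which counts $3$-belts, then forces $P \in \C(P_1 \# P_2)$ for irreducible $P_1, P_2$ with $n_1 + n_2 = 16$. The key input is the vanishing $b_8(I_{12}) = 0$, which I would derive from the fact that $I_{12}$ is $5$-vertex-connected: removing any $4$ vertices leaves a connected induced subgraph, so every summand in \eqref{eq:bk} is zero. Combined with $b_0(T_4) = -1$, Lemma~\ref{thm:mainlemma} gives $b_9(T_4 \# I_{12}) = -1 + 0 + 10 = 9$, and the same lemma applied to $P$ yields
\[
b_{n_1-4}(P_1) + b_{n_2-4}(P_2) = -1.
\]

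Because \eqref{eq:bk} forces $b_i(Q) \geq 0$ for $i \geq 1$ while $b_0(Q) = -1$ for all $Q$, this equation requires exactly one of $n_1, n_2$ to equal $4$. After relabeling, $P_2 = T_4$, $n_1 = 12$, and $b_8(P_1) = 0$. This condition says every induced subgraph on $8$ vertices of $P_1$ is connected, so $P_1$ has vertex connectivity at least $5$ and hence minimum degree at least $5$. Since $P_1$ is a simplicial $3$-polytope with $12$ vertices, the handshake identity $\sum \deg = 2(3 \cdot 12 - 6) = 60$ forces every vertex to have degree exactly $5$, and the classical uniqueness of the $5$-regular triangulation of $S^2$ identifies $P_1 = I_{12}$. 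Finally, since $T_4$ and $I_{12}$ are both regular, the degree analysis in the proof of Lemma~\ref{lem:face-transitive} shows that any identification of their faces yields the same polytope; thus $|\C(T_4 \# I_{12})| = 1$, and hence $P = T_4 \# I_{12}$.

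I expect the main obstacle to be the identification $P_1 = I_{12}$ from the connectivity condition: the connectivity and handshake steps are immediate, but uniqueness among $5$-regular triangulations of $S^2$ requires a separate classical argument (for instance, by reconstructing the triangulation fan by fan around a fixed vertex, using that each link must be a $5$-cycle).
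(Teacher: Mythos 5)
Your proof is correct and follows the paper's own proof closely: both reduce via Lemma~\ref{thm:mainlemma} to the identity $b_{n_1-4}(P_1)+b_{n_2-4}(P_2)=-1$, use the sign of $b_0$ to pin one factor as $T_4$, and then identify the remaining $12$-vertex factor as $I_{12}$. The difference is in this last step: the paper simply asserts that $I_{12}$ is the only $12$-vertex polytope without $4$-belts, whereas you derive it from $b_8(P_1)=0$ via connectivity and the handshake count, reducing to the classical uniqueness of the $5$-regular triangulation of $S^2$. That is a genuine and useful elaboration. One small caveat: the inference ``every $8$-vertex induced subgraph of $P_1$ is connected $\Rightarrow$ vertex connectivity $\geq 5$'' is not immediate as written, since a priori there could be a smaller separating set; you also need that deleting at most $3$ vertices leaves $P_1$ connected (Balinski's theorem handles $\leq 2$, and irreducibility, i.e.\ $b_9(P_1)=0$, handles $3$). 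Alternatively you can sidestep connectivity entirely: if some vertex $v$ of $P_1$ had degree at most $4$, then removing $N(v)$ together with enough additional vertices to make $4$ deletions would isolate $v$ inside an $8$-element vertex set, contradicting $b_8(P_1)=0$; so the minimum degree is at least $5$ directly, and the handshake identity finishes as you say.
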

\begin{proof}
  Let $b_k(P)=b_k(T_4\#I_{12})$ for all $k$. Using the same argument in the
  proof of Proposition~\ref{thm:cube}, we have $P=P_1\# P_2$ for some
  irreducible polytopes $P_1$ and $P_2$ with $n_1$ and $n_2$ vertices
  respectively, and
  $b_{n_1-4}(P_1)+b_{n_2-4}(P_2)=b_{4-4}(T_4)+b_{12-4}(I_{12})=-1+0$.  Note that
  if a polytope $Q$ with $n$ vertices satisfies $b_{n-4}(Q)<0$, then
  $Q=T_4$. Moreover, $I_{12}$ is the only polytope with $12$ vertices without
  $4$-belts.  Thus $\{P_1,P_2\}=\{T_4,I_{12}\}$ and $P=T_4\# I_{12}$.
\end{proof}

\section{Further study}

In this paper, we show that for a $3$-dimensional irreducible simplicial
polytope $P$ with $n$ vertices, $b_{n-4}(P)$ has the maximum when $P=B_n$.

\begin{prob}
  Find the maximum of $b_k(P)$ for a $3$-dimensional irreducible simplicial
  polytope $P$ with $n$ vertices.
\end{prob}

Since the polytopes in Table~\ref{tab:rigid} are the only possible
$3$-dimensional reducible combinatorially rigid polytopes, the following problem
is solved if we prove the rigidity of those polytopes. Note that except
$I_{12}\# B_n$, the remaining polytopes are finite.

\begin{prob}
  Classify all $3$-dimensional reducible combinatorially rigid polytopes.
\end{prob}

\begin{prob}
  Generalize the arguments in this paper to arbitrary dimensional simplicial
  polytopes.
\end{prob}

\section*{acknowledgement}
The authors would like to thank Young Soo Kwon for suggesting us
Lemma~\ref{lem:face-transitive}.

\bibliographystyle{amsplain}

\end{document}